\newtheorem{theorem}{Theorem}[section]
\newtheorem{proposition}[theorem]{Proposition}
\newtheorem{lemma}[theorem]{Lemma}
\newtheorem{corollary}[theorem]{Corollary}
\theoremstyle{definition}
\newtheorem{definition}{Definition}
\theoremstyle{remark}
\newtheorem{remark}{Remark}
\begin{document}

\title[Two-tone colorings and dihedral representations]{Two-tone colorings and surjective dihedral representations for links}

\author[K.~Ichihara]{Kazuhiro Ichihara}
\address{Department of Mathematics, College of Humanities and Sciences, Nihon University, 3-25-40 Sakurajosui, Setagaya-ku, Tokyo 156-8550, Japan}
\email{ichihara.kazuhiro@nihon-u.ac.jp}

\author[K.~Ishikawa]{Katsumi Ishikawa}
\address{Research Institute for Mathematical Sciences, Kyoto University, Kyoto 606-8502, Japan}
\email{katsumi@kurims.kyoto-u.ac.jp}

\author[E.~Matsudo]{Eri Matsudo}
\address{The Institute of Natural Sciences, Nihon University, 3-25-40 Sakurajosui, Setagaya-ku, Tokyo 156-8550, Japan}
\email{matsudo.eri@nihon-u.ac.jp}

\author[M.~Suzuki]{Masaaki Suzuki}
\address{Department of Frontier Media Science, Meiji University, 4-21-1 Nakano, Nakano-ku, Tokyo, 164-8525, Japan}
\email{mackysuzuki@meiji.ac.jp}

\thanks{This work was supported by JSPS KAKENHI Grant Numbers 22K03301,  20K14309, 20K03596.}

\subjclass[2020]{57K10}
\keywords{link, coloring, dihedral group}
\date{\today}

\begin{abstract}
It is well-known that a knot is Fox $n$-colorable for a prime $n$ if and only if the knot group admits a surjective homomorphism to the dihedral group of degree $n$. 
However, this is not the case for links with two or more components. 
In this paper, we introduce a two-tone coloring on a link diagram, and give a condition for links so that the link groups admit surjective representations to the dihedral groups. 
In particular, it is shown that the link group of any link with at least 3 components admits a surjective homomorphism to the dihedral group of arbitrary degree. 
\end{abstract}

\maketitle

\section{Introduction}\label{sec1}

One of the most well-known invariants of knots in 3-space must be the Fox's 3-colorablity. (See Remark~\ref{rmk11} for the definition of the Fox $n$-coloring.) 
In general, it is known that a knot is Fox $n$-colorable 
for a prime $n \ge 3$ if and only if the knot group admits a surjective homomorphism to the dihedral group $D_n$ of degree $n$. 
For instance, it is stated in \cite[Chap. VI, Exercises, 6, pp.92--93]{CrowellFox1963}. 
However, this is not the case for links with two or more components. 
In fact, some examples are given in \cite{IchiharaMatsudo2022} for $D_3$-coloring, which is the coloring by the symmetric group of degree three. 
For example, by the results in \cite[Theorem 1.2]{IchiharaMatsudo2022}, the link group of the torus link $T(2,q)$ admits a surjective homomorphism to $D_3$ if $q\equiv 0 \pmod {4}$. 
On the other hand, $T(2,q)$ is Fox 3-colorable if and only if $q\equiv 0\pmod 3$.

We remark that, although there are numerous papers studying the Fox colorings (cf. \cite{Przytycki1998, CarterSilverWilliams2014}), it seems that the relationship between the Fox colorings on links with two or more components and the surjective homomorphisms of the link groups to the dihedral groups has not been discussed, as far as the authors know. 

In this paper, we introduce a two-tone coloring on a link diagram, and give a condition for links  which guarantees that the link groups admit surjective homomorphisms to the dihedral groups. 
In particular, we show that the link group of any link with at least 3 components admits a surjective homomorphism to the dihedral group of arbitrary degree. 

\begin{remark}\label{rmk11}
Recall that a \textit{Fox $n$-coloring} on a link diagram $D$ is defined as a map $\Gamma:\{$arcs of $D\}\rightarrow \{ 0, 1, \dots , n-1 \}$, 
satisfying $2\Gamma(x)\equiv \Gamma(y)+\Gamma(z) \pmod n$ at each crossing of $D$ with the over arc $x$ and the under arcs $y$ and $z$. 
It is well-known that, for $n \ge 3$, a link is Fox $n$-colorable, i.e., a diagram of the link admits a non-trivial Fox $n$-coloring (a coloring with at least two colors), if and only if $\det (L) =0$ or $(n, \det(L)) \ne 1$, where $\det(L)$ denotes the determinant of the link. 
See \cite[Proposition 2.1]{LopezMatias2012} for example. 
Also a condition for knot groups to admit a surjective homomorphism to the dihedral groups in terms of the homology of the double branched covering is known. 
See \cite[14.8]{BurdeZieschang} for example. 
\end{remark}

To state our results, we prepare some notations. 
Let $D_n$ be the dihedral group of degree $n$. 
It is well-known that $D_n$ has the following presentation with $e$ the identity element: 
\[
D_n 
=\left< a, b \mid a^2 = b^n = (ab)^2 = e \right> .
\]
Note that any element in $D_n$ is represented as $a^x b^y$ ($x=0,1$ and $0 \le y \le n-1$). 
Thus, by setting $a_i := a b^i$ ($0 \le i \le n-1$) and $b_j := b^j$ ($1 \le j \le n-1$), 
we see that $D_n = \{ e, a_0, a_1, \dots , a_{n-1}, b_1, \dots, b_{n-1} \}$ as a set. 
In geometric viewpoint, the $a_i$'s represent reflections and $b_j$'s represent rotations as the symmetries of a regular polygon ($n$-gon). 

In the following, let $L$ be an oriented link in the 3-sphere $S^3$ with a link diagram $D$. 
We call a map $\Gamma:\{ \mbox{arcs on $D$} \} \rightarrow D_n$ a {\it $D_n$-coloring} on $D$ if it satisfies $\Gamma(x) \Gamma(z) = \Gamma(y) \Gamma(x)$ 
(respectively, $\Gamma(z) \Gamma(x) = \Gamma(x) \Gamma(y)$) in $D_n$ at each positive (resp. negative) crossing on $D$, where $x$ denotes the over arc, $y$ and $z$ the under arcs at the crossing supposing $y$ is the under arc before passing through the crossing and $z$ is the other. (See Figure~\ref{Fig/condition}.)

\begin{figure}[htbt]
\centering
  {\unitlength=1mm
  \begin{picture}(100,23)
   \put(25,0){\includegraphics[width=.44\textwidth]{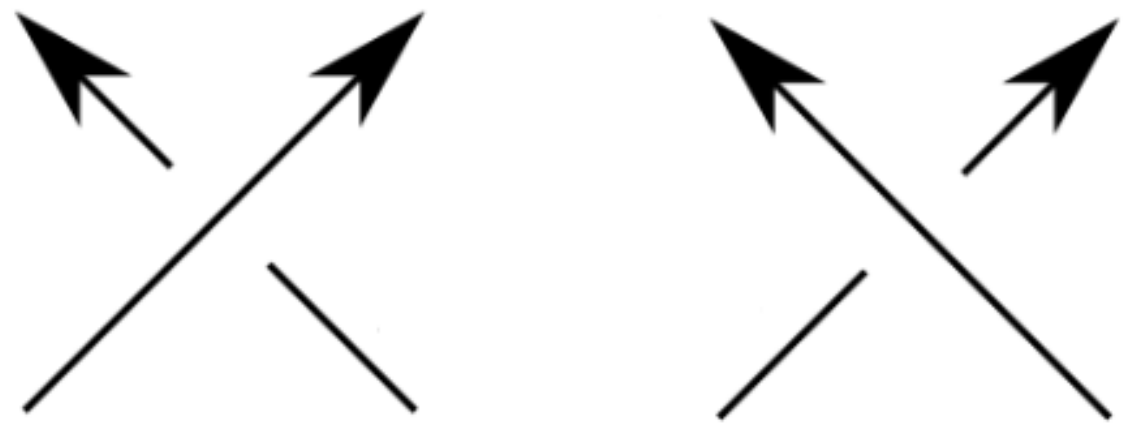}}
   \put(46,21.5){{\large $x$}}
   \put(24,21.5){{\large $z$}}
   \put(45,3){{\large $y$}}
   \put(58,21.5){{\large $x$}}
   \put(78,21.5){{\large $z$}}
   \put(58,3){{\large $y$}}
  \end{picture}}
\caption{Positive and negative crossings}\label{Fig/condition}
\end{figure}

\begin{remark}\label{rmk2}
The $D_n$-colorings and the Fox $n$-colorings are related in terms of representations of link groups to $D_n$ as follows. 
For a link diagram $D$ with $c$ crossings of a link $L$, 
set $g_1, \dots, g_c$ the Wirtinger generators of the link group $G_L$, i.e., $G_L = \pi_1 (S^3 - L)$. 
Then a $D_n$-coloring on $D$ corresponds to a map $\{ g_1 , \dots , g_c \} \to D_n$ which extends to a homomorphism of $G_L$ to $D_n$. 
When a $D_n$-coloring sends $g_k$'s to $a_i$'s (reflections, $0 \le i \le n-1$) in $D_n$, it induces a map $\{ \mathrm{arcs~of~} D \} \to \{ 0, 1, \dots, n-1 \}$, which gives a Fox $n$-coloring. 
Note that even if a link admits a nontrivial Fox $n$-coloring, it may not induce a surjective homomorphism from $G_L$ to $D_n$. 
See the example illustrated in Figure~\ref{Fig4}. 
In this case, the image of the Wirtinger generators by the homomorphism induced by the Fox $4$-coloring is the set $\{ a_0, a_2 \} \subset D_4$, but the elements $a_0$ and $a_2$ do not generate $D_4$. 
Thus the induced homomorphism is not surjective. 
\begin{figure}[htbt]
\centering
  {\unitlength=1mm
  \begin{picture}(90,40)
   \put(25,0){
  \includegraphics[width=.275\textwidth]{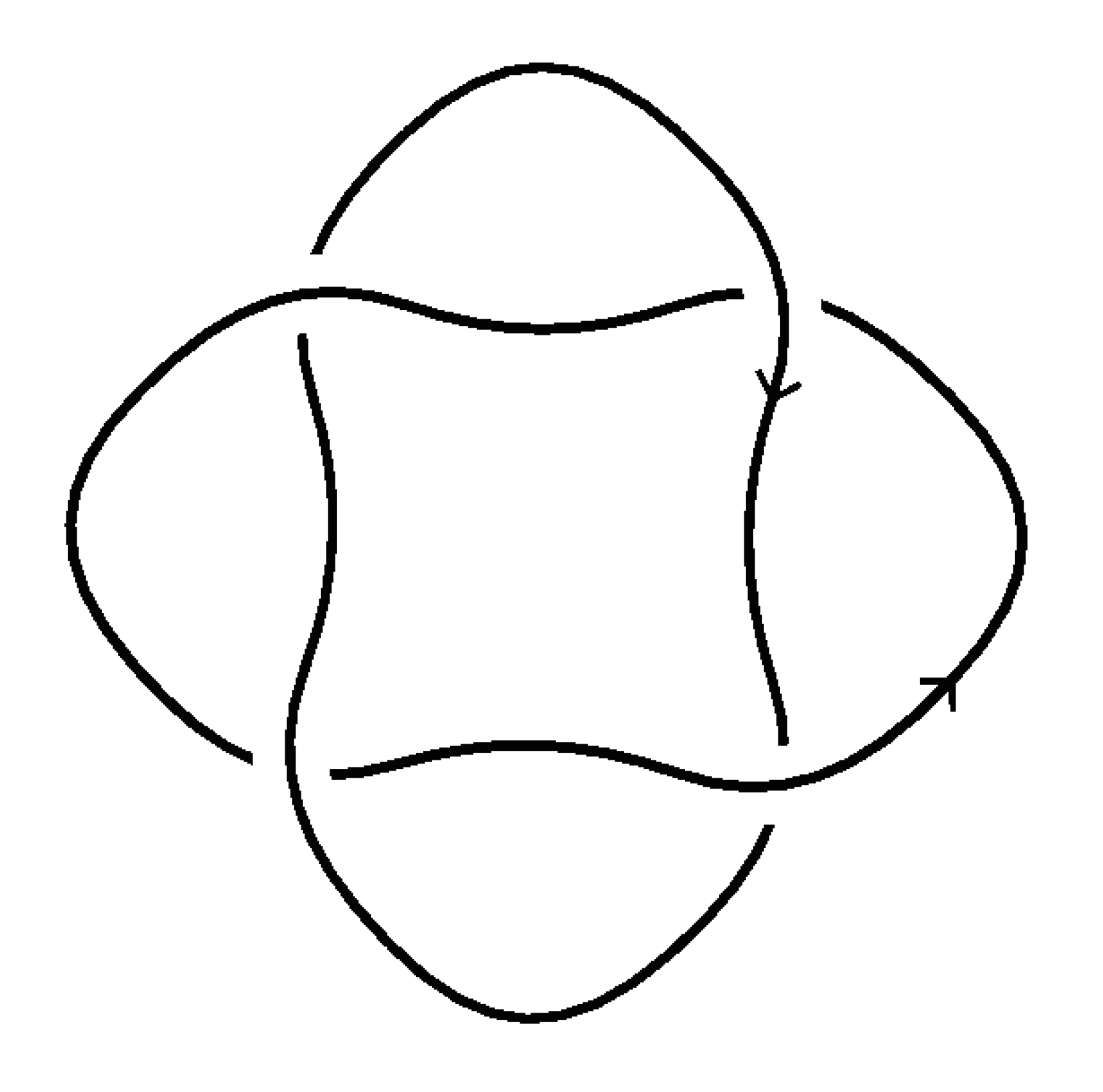} }
   \put(29,24){$0$}
   \put(56,24){$0$}
   \put(49,2){$2$}
   \put(49,31){$2$}
  \end{picture}}
\caption{Fox 4-colorable link}\label{Fig4}
\end{figure}
\end{remark}

The following is our key definition. 

\begin{definition}
Let $\Gamma$ be a $D_n$-coloring on a link diagram $D$ of an oriented link $L$. 
We say that $\Gamma$ is \textit{two-tone} if $\mathrm{Im} (\Gamma)$ does not contain the trivial element, i.e. $ e \not\in \mathrm{Im} (\Gamma)$, and $\mathrm{Im} (\Gamma) \cap \{a_0,\dots,a_{n-1} \} \ne \emptyset $ and $\mathrm{Im} (\Gamma) \cap \{ b_1,\dots, b_{n-1} \} \ne \emptyset $, that is,  the coloring uses colors from both $\{ a_i \} $ and $\{ b_j \}$. 
We say that a link is \textit{two-tone $D_n$-colorable} if, with some orientation, it has a diagram $D$ admitting a two-tone $D_n$-coloring. 
\end{definition}

Note that two-tone $D_n$-colorability is independent of the choice of orientations for links. 

An example of a two-tone $D_n$-colorable link is the pretzel link $P(6,6,6)$ which admits a two-tone $D_m$-coloring if $m \ge 4$. 
See Figure~\ref{Fig0} for the case where $m \ge 8$. 
\begin{figure}[htbt]
\centering
  {\unitlength=1mm
  \begin{picture}(90,60)
   \put(25,0){\includegraphics[width=.375\textwidth]{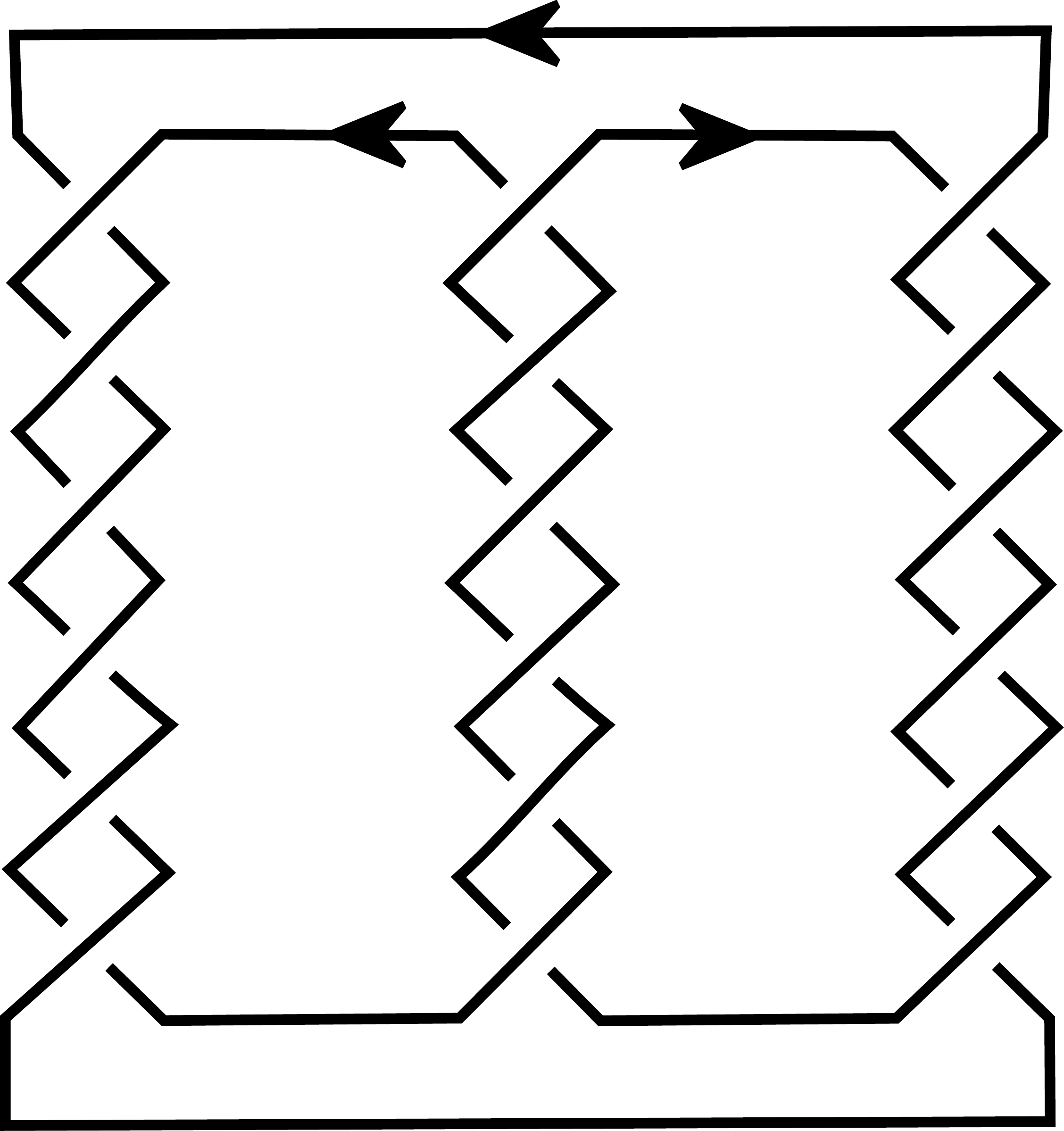}}
   \put(52,50){$b_{m-3}$}
   \put(33,37){$b_3$}
   \put(33,24){$b_{m-3}$}
   \put(72.5,18){$b_{m-3}$}
   \put(72.5,30.5){$b_3$}
   \put(40,-3){$b_3$}
   \put(35,45){$a_0$}
   \put(60,45){$a_1$}
   \put(33,30.5){$a_6$}
   \put(33,18){$a_0$}
   \put(37,6){$a_6$}
   \put(53,18){$a_5$}
   \put(53,24){$a_4$}
   \put(53,30.5){$a_3$}
   \put(53,37){$a_2$}
   \put(56,6){$a_7$}
   \put(72.5,37){$a_7$}
   \put(72.5,24){$a_1$}
  \end{picture}}
\caption{A two-tone $D_m$-coloring on $P(6,6,6)$ for $m \ge 8$}\label{Fig0}
\end{figure}

\medskip

Now the following are our main results. 
Here $D_\infty$ denotes the group presented by $\left< a, b \mid a^2 = (ab)^2 = e \right>$, and a two-tone $D_\infty$-coloring for a link is defined in the same way as above. 

\begin{theorem}\label{Thm1}
For a 2-component link $L = \ell_1 \cup \ell_2$, the following are equivalent.
\begin{itemize}
\item[(i)] $lk( \ell_1, \ell_2)$ is even. 
\item[(ii)] $L$ is two-tone $D_n$-colorable for some odd $n \ge 3$. 
\item[(iii)] $L$ is two-tone $D_\infty$-colorable.
\item[(iv)] The link group $G_L$ admits a surjective homomorphism to $D_n$ for every $n \ge 3$.
\item[(v)] The link group $G_L$ admits a surjective homomorphism to $D_\infty$.
\end{itemize}
\end{theorem}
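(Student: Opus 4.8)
The plan is to prove the two implication cycles
\[
\mathrm{(i)}\Rightarrow\mathrm{(iii)}\Rightarrow\mathrm{(ii)}\Rightarrow\mathrm{(i)}
\quad\text{and}\quad
\mathrm{(i)}\Rightarrow\mathrm{(v)}\Rightarrow\mathrm{(iv)}\Rightarrow\mathrm{(i)},
\]
which together yield the asserted equivalence. Throughout I use the correspondence of Remark~\ref{rmk2} between a coloring and a homomorphism $\varphi\colon G_L\to D_n$ (or $D_\infty$) sending each meridian to the color of its arc. Since all meridians of one component are conjugate in $G_L$, their $\varphi$-images lie in a single conjugacy class; as a nontrivial rotation and a reflection never share a conjugacy class, a two-tone coloring must send one component, say $\ell_1$, into reflections and the other, $\ell_2$, into nontrivial rotations. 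I will repeatedly use three elementary facts: in $D_\infty$, and in $D_n$ for odd $n$, the centralizer of a nontrivial rotation is the whole rotation subgroup while the centralizer of a reflection has order two; the abelianizations are $D_\infty^{\mathrm{ab}}\cong(\mathbb{Z}/2)^2$ and $D_n^{\mathrm{ab}}\cong(\mathbb{Z}/2)^2$ for $n$ even; and in $H_1(S^3\setminus L)$ one has $[\lambda_1]=lk(\ell_1,\ell_2)[\mu_2]$ and $[\lambda_2]=lk(\ell_1,\ell_2)[\mu_1]$ for the longitudes $\lambda_i$.

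For the implications back to (i) I would argue by a longitude--centralizer computation, writing $\sigma$ for the sign homomorphism to $\mathbb{Z}/2$ that sends reflections to $1$. In case (ii) or (iii) we have a two-tone coloring $\varphi$ with $\varphi(\mu_2)$ a nontrivial rotation; since $\lambda_2$ commutes with $\mu_2$, its image lies in the centralizer of $\varphi(\mu_2)$, i.e.\ among rotations, so $\sigma\varphi(\lambda_2)=0$. On the other hand $\sigma\varphi$ factors through $H_1$, so $\sigma\varphi(\lambda_2)=lk(\ell_1,\ell_2)\,\sigma\varphi(\mu_1)=lk(\ell_1,\ell_2)\cdot 1$ because $\mu_1$ maps to a reflection. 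Hence $lk(\ell_1,\ell_2)$ is even, giving (ii)$\Rightarrow$(i) (and, by the same computation, (iii)$\Rightarrow$(i)). For (iv)$\Rightarrow$(i) I would exploit that (iv) provides a surjection for \emph{every} $n\ge3$ and choose $n$ divisible by $4$, so that $D_n^{\mathrm{ab}}\cong(\mathbb{Z}/2)^2$ and, for any $g\in D_n$ with $q(g)\ne0$ under the abelianization $q\colon D_n\to D_n^{\mathrm{ab}}$, the image $q(Z(g))$ is a proper subgroup. Surjectivity forces $q\varphi(\mu_1),q\varphi(\mu_2)$ to be nonzero and to span $(\mathbb{Z}/2)^2$, while the two longitude relations place $lk\cdot q\varphi(\mu_2)\in q(Z(\varphi(\mu_1)))$ and $lk\cdot q\varphi(\mu_1)\in q(Z(\varphi(\mu_2)))$ in these proper subgroups; a short case check then shows $lk(\ell_1,\ell_2)$ must be even.

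For the forward direction I would construct, assuming $lk(\ell_1,\ell_2)$ even, a surjective two-tone $D_\infty$-coloring, which simultaneously proves (iii) and (v). Fixing $\bar\varphi=\sigma\varphi\colon G_L\to\mathbb{Z}/2$ with $\mu_1\mapsto1,\ \mu_2\mapsto0$, a lift to $D_\infty=\mathbb{Z}\rtimes\mathbb{Z}/2$ amounts to a twisted $1$-cocycle recording the rotation coordinate; any such lift automatically sends $\mu_1$ to a reflection and $\mu_2$ to a rotation, and it is surjective (hence two-tone) as soon as $\mu_2$ is sent to the generator $b$. The consistency obstruction to prescribing $\varphi(\mu_2)=b$ is precisely $\bar\varphi(\lambda_2)=lk(\ell_1,\ell_2)\bmod2$, which vanishes under our hypothesis, and I would produce the cocycle explicitly from a diagram: label the arcs of $\ell_2$ by $b^{\pm1}$ with sign switching at each passage under $\ell_1$ (well defined since $lk$ is even) and solve the resulting Fox-type system for the reflection labels on $\ell_1$. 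Finally (iii)$\Rightarrow$(ii) follows by composing an arbitrary two-tone $D_\infty$-coloring with $D_\infty\twoheadrightarrow D_n$ for a suitably large odd $n$ (so the finitely many rotation labels stay nontrivial), and (v)$\Rightarrow$(iv) follows from the same surjection $D_\infty\twoheadrightarrow D_n$, which is onto for every $n\ge3$.

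The step I expect to be the main obstacle is the existence part of this construction: showing that once the $\pm1$ rotation labeling of $\ell_2$ is fixed, the reflection labeling of $\ell_1$ genuinely closes up over $\mathbb{Z}$ (not merely modulo $n$). Equivalently, one must show that vanishing of the abelianized obstruction $\bar\varphi(\lambda_2)$ is sufficient, i.e.\ that the required anti-invariant class exists in $H_1$ of the double cover of $S^3\setminus L$ determined by $\bar\varphi$. I would handle this either by a direct holonomy computation around $\ell_1$---tracking the index jumps $\pm2$ contributed by the crossings where $\ell_2$ passes over $\ell_1$ and identifying their total with a linking-number sum---or by the corresponding twisted-cohomology computation; the even linking number is exactly what makes this total vanish. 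The remaining implications are comparatively routine once the centralizer facts and the homological expressions for the longitudes are in hand.
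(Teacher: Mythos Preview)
Your overall architecture matches the paper's: the same two cycles through (i), and the same reductions (iii)$\Rightarrow$(ii) and (v)$\Rightarrow$(iv) via the quotient $D_\infty\twoheadrightarrow D_n$. Where you diverge is in the mechanisms.

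\textbf{Backward implications.} Your centralizer argument for (ii)$\Rightarrow$(i) and your abelianization argument for (iv)$\Rightarrow$(i) (with $4\mid n$) are correct and genuinely different from the paper. The paper proves (ii)$\Rightarrow$(i) diagrammatically by putting the diagram in a special form and counting full twists (its Lemma~\ref{lem1}), and proves (iv)$\Rightarrow$(i) by a detour through determinants: if $\det(L)=0$ one invokes Lemma~\ref{lem3} (a Fox $4$-coloring argument) to get $lk$ even, and otherwise one chooses $n$ coprime to $\det(L),\det(\ell_1),\det(\ell_2)$ to force the coloring to be two-tone and then quotes (ii)$\Rightarrow$(i). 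Your route is shorter and purely group-theoretic; the paper's route is more diagrammatic and yields the side results Lemmas~\ref{lem1} and~\ref{lem3} along the way.

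\textbf{Forward implication.} Here there is a real gap in your primary plan. You propose to prescribe $\varphi(\mu_2)=b$, label the arcs of $\ell_2$ by $b^{\pm1}$, and then ``solve the resulting Fox-type system'' for the reflection labels on $\ell_1$, claiming the only obstruction is $\bar\varphi(\lambda_2)=lk\bmod 2$. But the holonomy around $\ell_1$ is not just a sum of the $\pm2$ translations coming from the $\ell_2$-over-$\ell_1$ crossings: the self-crossings of $\ell_1$ contribute affine \emph{reflections} $i\mapsto 2i_j-i$ with the over-label $i_j$ itself unknown, so you are facing a coupled inhomogeneous system whose solvability over $\mathbb{Z}$ is governed by the Alexander/Fox matrix of $\ell_1$, not merely by $lk$. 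Concretely, in the language of your twisted-cohomology alternative, the exact sequence
\[
0\;\longrightarrow\;\mathbb{Z}\cdot x\;\longrightarrow\;H_1(M-\tilde K)/(x+y)\;\longrightarrow\;H_1(M)\;\longrightarrow\;0
\]
(with $M$ the double cover branched over $\ell_1$ and $x$ a meridian of a lift of $\ell_2$) need not split, so a surjection to $\mathbb{Z}$ need not send $x$ to $\pm1$; thus $\varphi(\mu_2)=b$ may simply be unachievable. The paper's Proposition~\ref{prop} (and Corollary~\ref{cor3}) avoids this by never insisting on $\varphi(\mu_2)=b$: it only uses that $H_1(M)$ is finite (equivalently $\det(\ell_1)\neq0$, automatic for a knot) to produce \emph{some} surjection $H_1(M-\tilde K)/(x+y)\to\mathbb{Z}$, and then gets surjectivity of $\tilde f$ from an arbitrary element of $\pi_1$ hitting $b$, not from the meridian of $\ell_2$. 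Your fallback ``twisted-cohomology computation'' is exactly this argument; once you drop the insistence on $\varphi(\mu_2)=b$ and invoke finiteness of $H_1(M)$, your proof goes through and coincides with the paper's for this step.
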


\begin{remark}\label{rem3}
By considering the natural embedding of $D_n$ into $D_{2n}$, we see that the condition (ii) in Theorem~\ref{Thm1} is equivalent to 
that $L$ admits a two-tone $D_n$-coloring for some even $n \ge 3$ that assigns $b_i$ with $i \ne n/2$ to some arcs. 
We also remark that (iii) in Theorem~\ref{Thm1} does not imply that $L$ is two-tone $D_n$-colorable for every odd $n \ge 3$. 
Actually even if there is a two-tone $D_\infty$-coloring on a diagram of a link $L$, the coloring may not  give a two-tone $D_n$-coloring for some $n$, but a Fox $n$-coloring on a sub-diagram of $L$. 
For example, pretzel links of type $(m,2,m,2)$ with odd $m$ admit a two-tone $D_\infty$-coloring on a diagram, but no two-tone $D_m$-colorings. 
\end{remark}

On the other hand, for 2-component links with odd linking numbers, we have the following. 

\begin{theorem}\label{Thm2}
Let $L = \ell_1 \cup \ell_2$ be a 2-component link with $lk( \ell_1,\ell_2)$ odd.
Then the following hold. 
\begin{itemize}
\item[(i)] The link $L$ admits no two-tone $D_n$-colorings for any odd $n \ge 3$. 
\item[(ii)] 
If the link group $G_L$ admits a surjective homomorphism to $D_n$ for $n \ge 3$, then the homomorphism is induced from a Fox $n$-coloring on $\ell_1$, $\ell_2$ or $L$, i.e., the homomorphism sends a meridional element in $G_L$ to the trivial element or a reflection in $D_n$. 
\end{itemize}
\end{theorem}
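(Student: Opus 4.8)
The plan is to study a $D_n$-coloring $\Gamma$ (equivalently a homomorphism $\phi\colon G_L\to D_n$) through the parity homomorphism $\epsilon\colon D_n\to\mathbb{Z}/2$ that sends every reflection $a_i$ to $1$ and every rotation $b_j$ (including $e$) to $0$; its kernel is the cyclic subgroup $N=\langle b\rangle$ of rotations. Since all arcs of a single component are meridians of that component, their $\Gamma$-values are mutually conjugate, and because $\epsilon$ lands in an abelian group, $\epsilon\circ\Gamma$ is \emph{constant on each component}. Thus each of $\ell_1,\ell_2$ acquires a well-defined parity $\epsilon_1,\epsilon_2\in\mathbb{Z}/2$: a component is colored entirely by reflections (parity $1$) or entirely by rotations (parity $0$). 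The second ingredient is the local behaviour of colors: reading the crossing relation as $\Gamma(z)=\Gamma(x)^{\pm1}\Gamma(y)\Gamma(x)^{\mp1}$, conjugation by a reflection inverts a rotation ($b^s\mapsto b^{-s}$) while conjugation by a rotation fixes it. Hence, traversing a rotation-colored component, its color is multiplied by $-1$ at exactly each undercrossing beneath a reflection-colored arc.

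The heart of both parts is a monodromy computation. If $\ell_2$ is rotation-colored with some arc colored $b^{s}$, then going once around $\ell_2$ returns the color to $b^{(-1)^{k}s}$, where $k$ is the number of crossings at which $\ell_2$ passes under $\ell_1$ (passages under $\ell_2$ itself leave the color unchanged). As $k$ equals the number of crossings with $\ell_1$ over $\ell_2$, its parity is that of $lk(\ell_1,\ell_2)$, so closing up forces
\[
(-1)^{lk(\ell_1,\ell_2)}\,s\equiv s \pmod n .
\]
When $lk(\ell_1,\ell_2)$ is odd this reads $2s\equiv 0\pmod n$. For part (i), suppose $L$ had a two-tone $D_n$-coloring with $n$ odd. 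Two-toneness together with the constancy of parity forces one component to be reflection-colored and the other to be colored by \emph{nontrivial} rotations (the value $e$ is forbidden); applying the displayed relation to the rotation component gives $2s\equiv0\pmod n$, and since $n$ is odd this yields $s\equiv0$, i.e.\ the color $e$, a contradiction. Hence no two-tone $D_n$-coloring exists for odd $n$.

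For part (ii), let $\phi\colon G_L\to D_n$ be surjective with $lk(\ell_1,\ell_2)$ odd, and let $\mu_2$ be a meridian of $\ell_2$. Since the rotations form the proper subgroup $N$, a surjection must hit a reflection, so at least one component has parity $1$. If both do, every meridian maps to a reflection and $\phi$ is exactly a Fox $n$-coloring on $L$. If exactly one does, say $\ell_1$ reflection-colored and $\ell_2$ rotation-colored, the monodromy relation gives $2s\equiv0\pmod n$ for the rotation color $b^s$ of $\ell_2$; for $n$ odd this forces $s\equiv0$, so $\phi(\mu_2)=e$ and $\phi$ is induced by a Fox $n$-coloring on $\ell_1$, exactly as stated. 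The delicate point, and where I expect the real work to lie, is even $n$: then $2s\equiv0$ also admits $s=n/2$, and $b^{n/2}$ is the central involution of $D_n$. I would handle this by passing to the central quotient $D_n/\langle b^{n/2}\rangle\cong D_{n/2}$, in which $\phi(\mu_2)$ dies and the induced map restricts to a genuine Fox coloring on $\ell_1$; because $b^{n/2}$ is central it interacts trivially with every crossing, so $\phi$ is precisely that Fox coloring decorated by the central element. This is the sense in which such a homomorphism is still ``induced from a Fox coloring,'' and pinning down its interaction with the stated dichotomy (trivial element or reflection) for even $n$ is the main obstacle; for odd $n$ the dichotomy holds verbatim.
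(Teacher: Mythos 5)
Your part (i) is correct and in fact streamlines the paper's argument: the paper proves the equivalent Lemma~\ref{lem1} by first isotoping the diagram into a normal form in which all crossings between the two components are gathered into vertical full twists and then applying Lemma~\ref{lem22} box by box, whereas you count directly, along the rotation-colored component, the undercrossings beneath reflection-colored arcs, each of which inverts the rotation $b^{s}\mapsto b^{-s}$; closing up gives $(-1)^{k}s\equiv s \pmod n$ with $k\equiv lk(\ell_1,\ell_2)\pmod 2$, which is the same computation without the normal-form step. Your part (ii) for odd $n$ also matches the paper's logic: surjectivity forces at least one reflection-colored component, and the absence of a two-tone coloring then forces the other component's meridians to the identity, so the homomorphism comes from a Fox coloring on $L$ or on one component.

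The genuine gap is part (ii) for even $n$, which you explicitly leave open: the theorem is asserted for every $n\ge 3$, but your relation $2s\equiv 0\pmod n$ admits $s=n/2$, so you cannot conclude $\phi(\mu_2)=e$, and your proposed repair (quotienting by the central $\langle b^{n/2}\rangle$) does not recover the stated dichotomy, since $b^{n/2}$ is neither trivial nor a reflection. You should know that the paper's own proof of (ii) has exactly the same defect --- it derives (ii) from (i), which is stated and proved only for odd $n$ --- and the obstruction you isolated is not an artifact of your method. For instance, take $\ell_1$ a trefoil colored by a Fox $3$-coloring with values $\{a_0,a_2,a_4\}\subset D_6$ and $\ell_2$ a meridian circle of it (linking number $1$) colored by the central element $b_3$; all crossing relations hold, the image contains $b^2=a_0a_2$ and $b^3$, hence all of $D_6$, yet the meridians of $\ell_2$ map to a nontrivial rotation. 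So the even case cannot be closed as stated; your proof is complete precisely for odd $n$, where the paper's is as well.
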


For the links with at least 3 components, interestingly, the following holds. 

\begin{theorem}\label{Thm3}
Let $L$ be a link with at least 3 components. 
Then the link group $G_L$ admits a surjective homomorphism to $D_n$ for every $n \ge 3$. 
\end{theorem}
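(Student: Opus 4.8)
The plan is to fix $n \ge 3$ and construct a $D_n$-coloring on some diagram $D$ of $L$ whose image generates $D_n$; by Remark~\ref{rmk2} such a coloring is exactly a surjective homomorphism $G_L \to D_n$. The first observation is that surjectivity is cheap: since $\langle a_i, b^q \rangle = D_n$ whenever $\gcd(q,n) = 1$, it suffices to produce a valid coloring whose image contains at least one reflection $a_i$ and at least one \emph{primitive} rotation $b^q$ with $\gcd(q,n)=1$. Thus the whole problem reduces to exhibiting a consistent coloring of this qualitative shape, and the real work is consistency, not generation.

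Next I would organize colorings by assigning a \emph{type} to each component through a vector $\epsilon \in \mathbb{F}_2^{\,c}$, where $c$ is the number of components: the meridians of $\ell_i$ are colored by rotations $b^{\bullet}$ when $\epsilon_i = 0$ and by reflections $a_{\bullet}$ when $\epsilon_i = 1$ (a rotation component is further allowed to take the value $e$). A direct inspection of the coloring condition in Figure~\ref{Fig/condition} gives the local rules: passing under a rotation over-arc preserves the exponent of a rotation under-arc and shifts the index of a reflection under-arc, while passing under a reflection over-arc negates the exponent of a rotation under-arc and reflects the index of a reflection under-arc. Tracing these rules once around a rotation component shows that its exponent closes up to its original value if and only if the component passes under reflection components an even number of times, i.e. $\sum_{j} \bigl(lk(\ell_i,\ell_j) \bmod 2\bigr)\,\epsilon_j = 0$. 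This parity is exactly the obstruction responsible for the linking-number hypotheses in Theorems~\ref{Thm1} and \ref{Thm2}.

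The point of having $c \ge 3$ is that this $\mathbb{F}_2$-condition can now always be met while keeping a reflection in the image. I would single out one component $\ell_{i^*}$ to carry a primitive rotation and choose a nonempty reflection set $T \subseteq \{1,\dots,c\} \setminus \{i^*\}$ with $\sum_{j \in T}\bigl(lk(\ell_{i^*},\ell_j) \bmod 2\bigr) = 0$, sending all remaining meridians to $e$. Such a $T$ always exists when $c \ge 3$: the complement of $i^*$ has at least two elements, so one may take a single component linking $\ell_{i^*}$ evenly if one exists, and otherwise a pair of components linking $\ell_{i^*}$ with the same parity. With this choice $\ell_{i^*}$ satisfies its parity condition and may be colored by a primitive rotation $b^q$, while the components of $T$ supply reflections, so the image generates $D_n$ provided the coloring exists.

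The main obstacle is the remaining step: solving the induced $\mathbb{Z}/n$-valued system that closes up the reflection indices around each reflection component. Here the net monodromy around a component is an affine map $\theta \mapsto \pm\theta + s$ of $\mathbb{Z}/n$; when the number of flips is odd this map has a value $\theta$ solving $2\theta \equiv s$, which exists for odd $n$ and can be forced for general $n$ by passing through the embedding $D_n \hookrightarrow D_{2n}$ as in Remark~\ref{rem3}, while when it is even one must arrange the residual shift $s$ to vanish, using the parity of $T$ and the freedom to enlarge $T$ by canceling pairs of reflections. Equivalently, and more conceptually, I would reinterpret a lift of the fixed homomorphism $G_L \to D_n/\langle b\rangle \cong \mathbb{Z}/2$ determined by $\epsilon$ as a class in the anti-invariant part of the first homology of the associated double cover, so that solvability becomes a homological statement paralleling the knot case recalled in Remark~\ref{rmk11}. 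Establishing this solvability for every $n \ge 3$, uniformly in the diagram, is the crux of the argument.
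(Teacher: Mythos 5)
There is a genuine gap, and you locate it yourself: the last paragraph of your proposal reduces everything to ``solving the induced $\mathbb{Z}/n$-valued system that closes up the reflection indices around each reflection component'' and then declares this solvability to be ``the crux of the argument'' without establishing it. That solvability is exactly where the content of the theorem lives, and it is not automatic: the same affine system is what obstructs two-tone colorings for $2$-component links with odd linking number (Theorem~\ref{Thm2}), so one cannot expect it to be solvable ``uniformly in the diagram'' from the parity bookkeeping alone. Your $\mathbb{F}_2$-type analysis and the choice of $T$ correctly reproduce the necessary mod-$2$ conditions (this is essentially Lemma~\ref{lem1} and Remark~\ref{Remark5}), but the sufficiency direction is missing. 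Note also that your plan of fixing $n$ and patching even $n$ via $D_n\hookrightarrow D_{2n}$ is a symptom of the problem: the shift part of the monodromy need not vanish, and dividing by $2$ in $\mathbb{Z}/n$ is not the only issue.

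The paper closes this gap with two inputs you do not supply. First, a case split on the pairwise linking numbers: if some pair of components has even linking number, one passes to that $2$-component sublink and invokes Corollary~\ref{cor3} (whose proof uses that a knot has odd determinant), composing with the surjection $G_L\to G_{L'}$. Second, if all pairwise linking numbers are odd, then Lemma~\ref{lem3} guarantees every $2$-component sublink has nonzero determinant; taking a $3$-component sublink $\ell_1\cup\ell_2\cup\ell_3$, one has $lk(\ell_1,\ell_2\cup\ell_3)$ even and $\det(\ell_2\cup\ell_3)\ne 0$, and Proposition~\ref{prop} then produces a surjection onto $D_\infty$ (hence onto every $D_n$ at once, sidestepping your even-$n$ difficulty). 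Proposition~\ref{prop} is precisely the rigorous version of the homological reinterpretation you sketch at the end: it works in the double branched cover $M$ over the reflection sublink, and the hypothesis $\det\ne 0$ (i.e.\ $|H_1(M)|<\infty$) is what forces the anti-invariant quotient $H_1(M-\tilde K)/(x+y)$ to have rank exactly $1$, which is the solvability statement you need. Without identifying where a determinant or finiteness hypothesis enters and why it can always be arranged for $c\ge 3$, the proposal does not constitute a proof.
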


We remark that even if  the link group $G_L$ admits a surjective homomorphism to $D_n$  for every $n \ge 3$, the link $L$ may not be two-tone $D_n$-colorable for every $n \ge 3$. 
For example, pretzel links of type $(2m,2m,2m)$ with odd $m$ admit no two-tone $D_m$-colorings.  

As a corollary of the theorems, we have the following. 

\begin{corollary}\label{CorG}
If a link $L$ is two-tone $D_m$-colorable for some odd  $m$, then $G_L$ admits a surjective homomorphism to $D_n$ for every $n \ge 3$. 
If $G_L$ admits a surjective homomorphism to $D_n$ for some $n$, then $L$ 
contains a two-tone $D_n$-colorable sub-link or a Fox $n$-colorable sub-link. 
\end{corollary}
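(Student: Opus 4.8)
The plan is to prove the two implications separately, in each case reducing to the main theorems by a case analysis on the number of components of $L$.

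For the first implication, suppose $L$ is two-tone $D_m$-colorable for some odd $m\ge 3$. First I would observe that such an $L$ must have at least two components: in a knot diagram all Wirtinger generators are mutually conjugate, so under any homomorphism to $D_m$ their images lie in a single conjugacy class, and since reflections and rotations never share a conjugacy class, the image of a knot group can meet $\{a_i\}$ or $\{b_j\}$ but not both, contradicting the two-tone condition. Having ruled out knots, I would split into two cases. If $L$ has exactly two components, then being two-tone $D_m$-colorable for odd $m$ is precisely condition (ii) of Theorem~\ref{Thm1}, and the implication (ii)$\Rightarrow$(iv) gives a surjection $G_L\to D_n$ for every $n\ge 3$. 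If $L$ has at least three components, Theorem~\ref{Thm3} yields the same conclusion directly (the two-tone hypothesis is not even needed here). This settles the first statement.

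For the second implication, suppose $\phi\colon G_L\to D_n$ is surjective for some $n$. Since all meridians of a fixed component are conjugate, I would partition the components of $L$ into three classes according to whether $\phi$ sends their meridians to the identity $e$, to a reflection $a_i$, or to a nontrivial rotation $b_j$; call these classes $U$, $S$, and $T$. Deleting the components of $U$ from the diagram, I claim the coloring $\Gamma$ induced by $\phi$ (as in Remark~\ref{rmk2}) restricts to a genuine $D_n$-coloring of the sub-link $L'$ on the remaining components, whose image avoids $e$. Then I would argue from the structure of $D_n$: because $\phi$ is surjective and $\mathrm{Im}\,\phi=D_n$ is nonabelian for $n\ge 3$, the class $S$ cannot be empty, since otherwise the meridian images would all be rotations or $e$ and would generate the abelian rotation subgroup. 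If $T\ne\emptyset$, then $\Gamma|_{L'}$ uses both a reflection and a rotation and avoids $e$, so $L'$ is two-tone $D_n$-colorable; if $T=\emptyset$, then $\Gamma|_{L'}$ uses only reflections, and surjectivity forces at least two distinct reflections, so by Remark~\ref{rmk2} the restriction is a nontrivial Fox $n$-coloring of $L'$. In either case $L$ contains the desired sub-link.

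The main obstacle I anticipate is the claim that deleting the $U$-components leaves a well-defined coloring: removing an over-strand colored by $e$ forces the two under-arcs at that crossing to carry equal colors, which is exactly what is needed for them to merge into a single arc of the sub-diagram, while removing an under-strand colored by $e$ imposes no constraint on the surviving over-arc. I would verify these two local computations from the crossing relations $\Gamma(x)\Gamma(z)=\Gamma(y)\Gamma(x)$ and $\Gamma(z)\Gamma(x)=\Gamma(x)\Gamma(y)$, and check that the resulting arc-identifications are globally consistent, so that $\Gamma|_{L'}$ is indeed a valid $D_n$-coloring. The remaining bookkeeping — that surjectivity rules out the degenerate images and that the Fox coloring obtained is nontrivial — is routine once this restriction step is in place.
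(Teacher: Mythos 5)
Your proof is correct and follows essentially the same route as the paper's: the first implication is reduced to Theorem~\ref{Thm1} ((ii)$\Rightarrow$(iv)) for two components or to Theorem~\ref{Thm3} for three or more, and the second is obtained by passing to the sub-link of components whose meridians are not sent to $e$ and reading off a two-tone $D_n$-coloring or a nontrivial Fox $n$-coloring there. The only difference is one of detail: you verify explicitly that deleting the trivially colored components leaves a well-defined $D_n$-coloring on the remaining sub-diagram, a step the paper leaves implicit when it says the coloring comes from a coloring of a sub-link.
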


On the other hand, even if a link $L$ is known to be two-tone $D_n$-colorable for some $n$, finding a two-tone $D_n$-coloring on a given diagram of $L$, or, finding a surjective homomorphism of $G_L$ to $D_n$, is a tedious task in general. 
The next proposition and its proof give a simple way to find a two-tone $D_n$-coloring on some link diagrams for any $n \ge 3$. 

\begin{proposition}\label{prop1}
Suppose that there exists a trivial component $\ell_0$ of a link $L$ and that $lk (\ell_0 , \ell)$ is even for every component $\ell \subset L - \ell_0$. 
Then any diagram of $L$ admits a two-tone $D_n$-coloring  for every odd $n \geq 3$ which assigns the arcs on $\ell_0$ to $a_i$'s and the other arcs to $b_j$'s.
\end{proposition}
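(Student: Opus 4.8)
The plan is to construct the coloring explicitly, arc by arc, and then to verify its well-definedness; the one genuinely global issue---closing up the coloring along $\ell_0$---I will handle by passing to a branched cover rather than by a direct diagrammatic telescoping.

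First I would record how the two types of colors propagate at a crossing. Writing a reflection as $a_i$ and a rotation as $b_j$ and using $b^k a = a b^{-k}$, a direct computation of the defining relation $\Gamma(x)\Gamma(z)=\Gamma(y)\Gamma(x)$ (and of its negative analogue) shows the following. If the understrand carries rotations, then the outgoing rotation index equals the incoming one when the overstrand is a rotation and equals its negative when the overstrand is a reflection. If the understrand carries reflections, then the outgoing reflection is $a_{2k_x-k_y}$ when the overstrand is the reflection $a_{k_x}$ (the usual Fox rule) and is $a_{k_y\pm 2k_x}$ when the overstrand is the rotation $b_{k_x}$, with the sign determined by the sign of the crossing. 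In particular the rule is compatible with assigning reflections to every arc of $\ell_0$ and rotations to every other arc, since composing with $D_n\to\mathbb{Z}/2$ sends such data to the mod-$2$ coloring that is constant on each component. I would then color the non-$\ell_0$ components: along such a component $\ell$ the magnitude of the rotation index is preserved at every crossing and its sign is reversed exactly at the crossings where $\ell$ passes under $\ell_0$, so the assignment $b_{\pm c}$ (fixed $c\neq 0$, sign toggled at each under-pass below $\ell_0$) is well-defined provided the number of those under-passes is even. That number is $\equiv lk(\ell_0,\ell)\pmod 2$, which is even by hypothesis. This is the easy, explicit half of the construction.

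The crux is the reflection coloring on $\ell_0$: the rules force its index to jump by $\pm 2c$ at each under-pass below another component and to obey the Fox rule at each self-crossing, and one must show these constraints are simultaneously solvable, i.e.\ that the forced coloring closes up. This is the step I expect to be the main obstacle, because the self-crossing relations couple the unknown indices to one another (a reflection about the varying over-arc value) rather than producing a simple telescoping sum, so a bare-hands monodromy computation is awkward. I would dispatch it by working at the level of the link group instead of the diagram. Composing any prospective $\rho\colon G_L\to D_n$ with $D_n\to\mathbb{Z}/2$ yields the fixed class $\phi$ dual to $\ell_0$, and realizations of $\rho$ over $\phi$, under $D_n=\mathbb{Z}/n\rtimes\mathbb{Z}/2$, correspond to $\phi$-anti-equivariant homomorphisms $\pi_1(\widetilde X)\to\mathbb{Z}/n$, where $\widetilde X$ is the double cover of $S^3-L$ determined by $\phi$. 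Since $\ell_0$ is trivial, the double branched cover of $S^3$ over $\ell_0$ is again $S^3$, so $\widetilde X$ is the complement of a link $\widetilde\ell_0\cup\widehat L$ in $S^3$; and since each $lk(\ell_0,\ell)$ is even, every $\ell$ lifts to two components $\ell^+,\ell^-$ interchanged by the covering involution $\tau$. Because $H_1$ of a link complement in $S^3$ is free on the meridians, I can simply send the meridians of $\ell^+,\ell^-$ to $c,-c$ and the meridian of $\widetilde\ell_0$ to $0$; this is $\tau$-anti-equivariant and hence descends to an honest homomorphism $\rho\colon G_L\to D_n$ of the required form. Restricting $\rho$ to the Wirtinger generators of the given diagram produces the desired coloring, and the closing-up problem evaporates precisely because $\rho$ is a genuine homomorphism (so no diagram-level consistency check is needed).

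Finally I would verify the two-tone condition: reflections are never the identity, the rotations $b_{\pm c}$ are nontrivial since $c\neq 0$ and $n$ is odd, and both a reflection (on $\ell_0$) and a rotation (on any other component, which exists as $L$ is tacitly assumed to have a component besides $\ell_0$) actually occur. Hence $e\notin\mathrm{Im}(\Gamma)$ and the coloring is two-tone, with the arcs of $\ell_0$ sent to $a_i$'s and all remaining arcs to $b_j$'s, as claimed.
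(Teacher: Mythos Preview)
Your proof is correct, but the route you take for the crucial closing-up of the $\ell_0$-coloring differs from the paper's own argument. The paper stays purely diagrammatic: it first passes to a particular diagram in which $\ell_0$ appears as the standard unknot and all interaction with $L-\ell_0$ is confined to a column of full-twist boxes; it then assigns $a_0$ to one arc of $\ell_0$ and $b_1$ to one arc of each other component, and uses the full-twist lemma to check directly that everything closes up, the key point being that the boxes contributing shifts $+2$ and $-2$ to the $a$-index occur in equal numbers (because the $b$-colors alternate between $b_1$ and $b_{n-1}$ and each component of $L-\ell_0$ runs through an even number of boxes). Your argument instead constructs the homomorphism $\rho\colon G_L\to D_n$ globally, by passing to the double cover branched over $\ell_0$, observing that it is $S^3$ (so $H_1$ of the lifted complement is free on meridians), and writing down a $\tau$-anti-equivariant map to $\mathbb{Z}/n$. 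This is exactly the approach the paper records in its appendix as the referee's ``homological'' proof. Your version has the advantages of working uniformly on every diagram at once and of generalising immediately (e.g.\ to $\det(\ell_0)=1$); the paper's diagrammatic argument is more elementary and yields an explicit coloring without covering spaces. One small redundancy worth noting: once $\rho$ is built homologically, your earlier diagrammatic coloring of $L-\ell_0$ is already subsumed, though it serves well as motivation.
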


\section{Properties of $D_n$-coloring}
In this section, we study some properties of $D_n$-colorings, and give lemmas which will be used in the remaining sections. 
In the following, we set $A_n := \{ a_i \}$ and $B_n:=\{ b_j \}$ for $D_n$. 

\begin{lemma}\label{lem21}
Let $\Gamma$ be a $D_n$-coloring on a diagram $D$ of an oriented link $L$ in $S^3$. 
At a crossing on $D$, $x$ denotes the over arc, and $y$ and $z$ the under arcs at the crossing supposing that $y$ (resp. $z$) is the under arc before (resp. after) passing through the crossing. 
Then the following hold. 
\begin{enumerate}
\item
$\Gamma(y)$ and  $\Gamma(z)$ are both in $A_n$ or both in $B_n$. 
\item
If $\Gamma(x) \in B_n$ and $\Gamma(y) \in B_n$, then $\Gamma(z) = \Gamma (y)$. 
\item
If $\Gamma(x) = a_i$ and $\Gamma(y) = a_{i'}$, then $\Gamma(z) = a_k$ and $ k \equiv 2 i' - i \pmod n$.
\item
If $\Gamma(x) = a_i$ and $\Gamma(y) = b_j$, then $\Gamma(z) = b_k$ and $ k \equiv n-j  \pmod n$. 
\item
If $\Gamma(x) = b_j$ and $\Gamma(y) = a_j$, then $\Gamma(z) = a_k$ and $ k \equiv i + 2j $ (resp. $ k \equiv i - 2j$ ) $ \pmod n$ if the crossing is a positive (resp. negative) crossing. 
\end{enumerate}
\end{lemma}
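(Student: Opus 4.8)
The plan is to verify each of the five assertions directly from the defining relation of a $D_n$-coloring, working crossing by crossing. Recall that at a positive crossing with over arc $x$ and under arcs $y$ (before) and $z$ (after), the relation reads $\Gamma(x)\Gamma(z) = \Gamma(y)\Gamma(x)$, which I rewrite as $\Gamma(z) = \Gamma(x)^{-1}\Gamma(y)\Gamma(x)$; that is, $\Gamma(z)$ is the conjugate of $\Gamma(y)$ by $\Gamma(x)$. At a negative crossing the relation $\Gamma(z)\Gamma(x) = \Gamma(x)\Gamma(y)$ gives $\Gamma(z) = \Gamma(x)\Gamma(y)\Gamma(x)^{-1}$, again a conjugate of $\Gamma(y)$. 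So the whole lemma is a computation of conjugates in $D_n$, for which I would fix the bookkeeping $a_i = ab^i$, $b_j = b^j$ together with the standard relations $a^2 = b^n = e$ and $ba = ab^{-1}$ (equivalently $a b^i a^{-1} = b^{-i}$ and $a^{-1} = a$).

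First I would record the conjugation formulas I will reuse. Since conjugation is an automorphism, it preserves the partition of $D_n$ into the reflections $A_n$ and the nonidentity rotations $B_n$ (and fixes $e$); this immediately yields (1), because $\Gamma(z)$ is a conjugate of $\Gamma(y)$ and hence lies in the same class ($A_n$ or $B_n$) as $\Gamma(y)$. For (2), if $\Gamma(x) = b_j$ and $\Gamma(y) = b_i \in B_n$, then $\Gamma(z) = b_j^{\pm 1} b_i b_j^{\mp 1} = b_i$ since rotations commute, so $\Gamma(z) = \Gamma(y)$. For (3), conjugating the reflection $a_{i'}$ by the reflection $a_i$: using $a_i a_{i'} a_i^{-1}$, a short computation with $a_i = ab^i$ gives $a_k$ with $k \equiv 2i - i' \pmod n$, so I should check the indexing conventions in the statement carefully here, since the claim is $k \equiv 2i' - i$; the resolution is which of $x,y$ is being conjugated, and I would make the roles explicit so the formula matches. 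For (4), conjugating $b_j$ by the reflection $a_i$ sends it to $a_i b^j a_i^{-1} = b^{-j} = b_{n-j}$, giving $k \equiv n - j \pmod n$ as claimed, with no dependence on $i$ (consistent with the orientation-independence remark).

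Item (5) is where the crossing sign genuinely matters, and it is the only place the two cases diverge, so I expect this to be the main point requiring care. Here $\Gamma(x) = b_j$ is a rotation and $\Gamma(y) = a_i$ is a reflection (I read the statement's ``$\Gamma(y)=a_j$'' as a typo for $a_i$, matching the appearance of $i$ in the conclusion). At a positive crossing $\Gamma(z) = b_j^{-1} a_i b_j = b^{-j}(ab^i)b^j = a b^{i+2j}$, giving $k \equiv i + 2j$; at a negative crossing $\Gamma(z) = b_j a_i b_j^{-1} = b^{j}(ab^i)b^{-j} = a b^{i-2j}$, giving $k \equiv i - 2j$, exactly as stated. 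The only real obstacle is keeping the conventions straight: which arc is conjugated by which, and the precise form of the coloring relation at positive versus negative crossings from Figure~\ref{Fig/condition}. Once those are pinned down, every case is a one-line manipulation using $ba = ab^{-1}$, so I would present the five items as a single unified computation of ``$\Gamma(z) = \Gamma(x)^{\mp 1}\,\Gamma(y)\,\Gamma(x)^{\pm 1}$'' followed by evaluating this in the normal form $a^x b^y$.
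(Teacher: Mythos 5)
Your proof is correct and takes essentially the same approach as the paper: the paper likewise reads the crossing relation as $\Gamma(z)=\Gamma(x)^{\mp1}\Gamma(y)\Gamma(x)^{\pm1}$, computes only case (4) explicitly in the normal form $a^xb^y$ using $ba=ab^{-1}$, and declares (2), (3), (5) ``proved in the same way.'' The discrepancy you flag in (3) is a typo in the statement rather than a defect of your argument: the conjugate is $a_i a_{i'} a_i = a_{2i-i'}$, consistent with the Fox relation $2\Gamma(x)\equiv\Gamma(y)+\Gamma(z)$, so the printed ``$2i'-i$'' has $i$ and $i'$ transposed, and likewise ``$\Gamma(y)=a_j$'' in (5) should read $a_i$.
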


\begin{proof}
By definition of a $D_n$-coloring, $\Gamma(y)$ and  $\Gamma(z)$ are conjugate in $D_n$, and from this, (1) holds. 
We give a proof of the case (4) when the crossing is a positive crossing. 
The others (2), (3), (5) are proved in the same way. 
Suppose that $\Gamma(x) = a_i$ and $\Gamma(y) = b_j$. 
By definition of a $D_n$-coloring, we have the following. 
\begin{align*}
\Gamma(z) &= (a_i)^{-1} b_j a_i = b^{n-i} a^{-1} b^j a b^i \\
&= a b^{i+j-n} a b^i = b^{n-i-j+i} = b^{n-j}=b_{n-j}
\end{align*}
Thus $ \Gamma(z) = b_k$ and $ k \equiv n-j  \pmod n$ holds. 
\end{proof}

\begin{remark}\label{rmk21}
Note that (1) in the lemma implies that all the strands on a diagram of a particular component must be colored by $a_i$'s or by $b_j$'s for a given $D_n$-coloring. 
Also note that the tone of the colors for a particular component is independent of the choice of a diagram: 
If all the strands on a diagram for a particular component are colored by $b_j$'s by a $D_n$-coloring, then all the strands for the component are also colored by $b_j$'s on any diagram by the $D_n$-coloring obtained by performing Reidemeister moves. 
We will use these facts in the rest of the paper repeatedly. 
\end{remark}

\begin{lemma}\label{lem22}
Let $\Gamma$ be a $D_n$-coloring on a diagram $D$ of an oriented link $L$ in $S^3$. 
Let $x,y,z,w$ be either the arcs depicted in Figure~\ref{Fig22} (left), or the arcs depicted in Figure~\ref{Fig22} (right). 
If $\Gamma(x) = a_i$ and $\Gamma(y) = b_j$, then $\Gamma(z) = a_k$ with $ k \equiv i-2j  \pmod n$ and $\Gamma(w) = b_l$ with $ l \equiv n-j  \pmod n$.
\end{lemma}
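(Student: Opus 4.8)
The plan is to read the tangle of Figure~\ref{Fig22} as two ordinary crossings and to obtain the colors of $z$ and $w$ by composing the two corresponding instances of Lemma~\ref{lem21}. First I would record, using Lemma~\ref{lem21}(1) together with Remark~\ref{rmk21}, that the strand carrying $x$ stays entirely in $A_n$ while the strand carrying $y$ stays entirely in $B_n$; hence $z = a_k$ and $w = b_l$ for some indices $k,l$, and it remains only to compute $k$ and $l$ modulo $n$. In each of the two pictures the reflection strand (the arcs $x$ and $z$) and the rotation strand (the arcs $y$ and $w$) meet at exactly two crossings, one at which the reflection strand passes over and one at which the rotation strand passes over, so the local picture is a single full twist of definite sign.

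Next I would compute $l$. At the crossing where the over-arc carries a reflection color ($a$-type) and the incoming under-arc is the rotation color $b_j$, Lemma~\ref{lem21}(4) forces the outgoing under-arc to be $b_{n-j}$. Tracing this arc through the second crossing, where it is the over-arc and therefore retains its color, identifies it with $w$, so that $l \equiv n-j \pmod n$ as claimed.

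Then I would compute $k$ by applying Lemma~\ref{lem21}(5) at the crossing where the rotation strand is the over-arc. In the left picture that over-arc is the arc $b_{n-j}$ just produced and the incoming under-arc is $a_i$; reading the crossing sign from the figure and substituting ``$j\mapsto n-j$'' into part (5) gives index $i + 2(n-j)$, which is $i - 2j$ since $2(n-j)\equiv -2j \pmod n$, and this arc survives the remaining crossing as $z$. In the right picture the order of the two crossings and the sign are reversed, so part (5) is instead applied with over-arc $b_j$ and incoming under-arc $a_i$ at the opposite sign, producing $a_{i-2j}$ directly; the other crossing is then handled by Lemma~\ref{lem21}(4) exactly as above to recover $w = b_{n-j}$. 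In both cases one obtains $k \equiv i - 2j \pmod n$.

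The only delicate point is the bookkeeping of crossing signs and of which strand is the over-arc at each crossing, since Lemma~\ref{lem21}(5) distinguishes $i+2j$ from $i-2j$ according to the sign. The substance of the lemma is precisely that the two admissible local pictures of Figure~\ref{Fig22} are arranged so that, after using $2(n-j)\equiv -2j \pmod n$, both collapse to the single congruence $k \equiv i-2j \pmod n$. Once the over/under and orientation data are fixed from the figure, the remainder is the two-fold composition described above and involves no further ideas.
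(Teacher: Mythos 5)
Your proposal is correct and follows essentially the same route as the paper: decompose the full twist into its two crossings, apply Lemma~\ref{lem21}(4) at the crossing with the reflection-colored over arc to get $w=b_{n-j}$, then apply Lemma~\ref{lem21}(5) with the rotation-colored over arc and reduce $i+2(n-j)\equiv i-2j \pmod n$. The paper writes out only the positive case and declares the negative case similar, whereas you sketch both; otherwise the arguments coincide.
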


\begin{figure}[htbt]
\centering
  {\unitlength=1mm
  \begin{picture}(35,17)
   \put(16,0){
  \includegraphics[width=.095\textwidth]{ 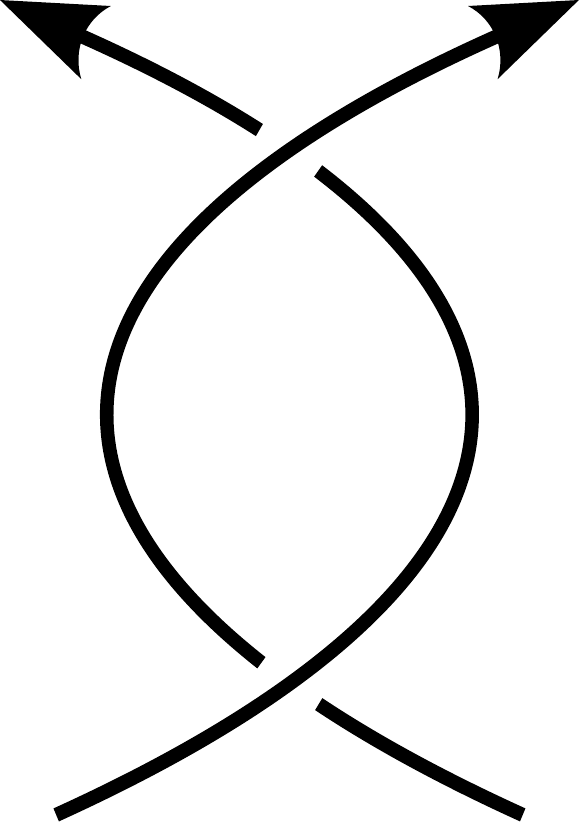} }
   \put(15,0){$x$}
   \put(29,0){$y$}
   \put(15,17){$z$}
   \put(29,17){$w$}
  \end{picture}}
  {\unitlength=1mm
  \begin{picture}(35,17)
   \put(16,0){
  \includegraphics[width=.095\textwidth]{ 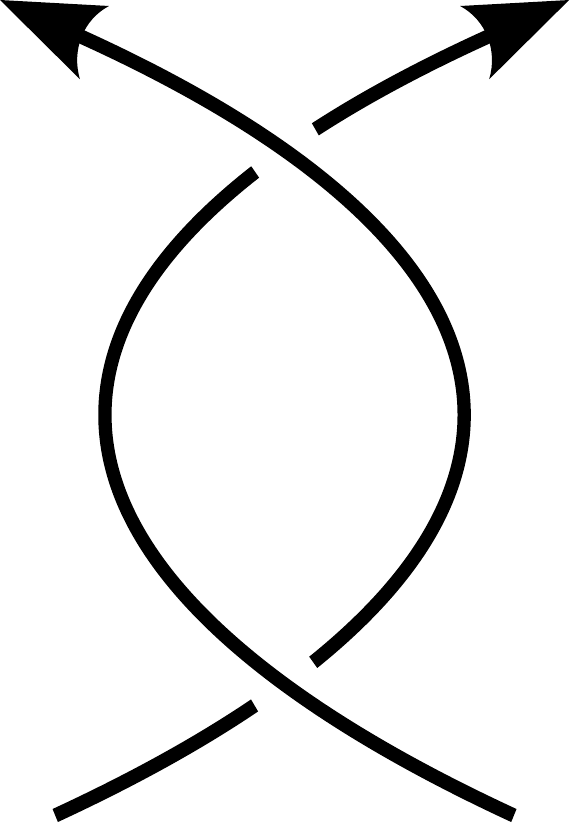} }
   \put(15,0){$x$}
   \put(29,0){$y$}
   \put(15,17){$z$}
   \put(29,17){$w$}
  \end{picture}}
\caption{\ A positive full twist (left). \quad A negative full twist (right)}\label{Fig22}
\end{figure}

\begin{proof}
We only give a proof for the positive full twist case. 
A proof for the other case is similar. 
In that case, by Lemma~\ref{lem21}(4), $\Gamma(w) = b_l$ with $ l \equiv n-j  \pmod n$ since $\Gamma(x) = a_i$ and $\Gamma(y) = b_j$. 
Then, by Lemma~\ref{lem21}(5), $\Gamma(z) = a_k$ and $ k \equiv i + 2(n-j) \equiv i -2j  \pmod n$ since $\Gamma(w) = b_l$ with $ l \equiv n-j  \pmod n$ and $\Gamma(x) = a_i$. 
\end{proof}

\section{Two-tone colorings and surjective homomorphisms to $D_\infty$}
In this section, 
we give a key proposition to prove the theorems. 

In the following, let $lk(L,L')$ denote the (total) linking number of oriented links $L, L'$, i.e., $lk(L,L') = \sum_{\ell \subset L, \ell' \subset L'} lk(\ell, \ell')$.  
The linking number is calculated for the link with arbitrarily chosen orientations. 
Note that the parity of such a linking number is independent of the choice of orientations.

\begin{proposition}\label{prop}
Suppose that a link L contains a component $\ell_0$ with $lk(\ell_0, L')$ even and $\det(L') \neq 0$, where $L' = L - \ell_0$.
Then 
$L$ admits a two-tone $D_\infty$-coloring that induces a surjective homomorphism from $G_L$ to $D_\infty$.
\end{proposition}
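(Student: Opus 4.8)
The plan is to realize the desired representation through the index-two subgroup of $D_\infty$ generated by the rotations, with the parity hypothesis dictating which component receives which tone. Write $D_\infty=\langle b\rangle\rtimes\langle a\rangle$ with $\langle b\rangle\cong\mathbb{Z}$ normal and $aba^{-1}=b^{-1}$, so $D_\infty/\langle b\rangle\cong\mathbb{Z}/2$. I would first fix the ``coarse'' data $\psi\colon G_L\to D_\infty/\langle b\rangle=\mathbb{Z}/2$ by sending the meridian of $\ell_0$ to the trivial coset and every meridian of $L'$ to the nontrivial one; this forces $\ell_0$ to be colored by rotations and $L'$ by reflections. The assignment is legitimate precisely because of the hypotheses: the loop $\ell_i$ has $\psi$-image $lk(\ell_0,\ell_i)\bmod 2$, so since $lk(\ell_0,L')$ is even the component $\ell_0$ lies in $H:=\ker\psi$, which is what will let its meridian receive a nonzero rotation (the opposite choice, coloring $\ell_0$ by reflections, would require every individual $lk(\ell_0,\ell_i)$ to be even and is not available here). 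Lifting $\psi$ to a homomorphism $\phi\colon G_L\to D_\infty$ then amounts to choosing a homomorphism $f\colon H\to\langle b\rangle=\mathbb{Z}$ that is anti-invariant under the conjugation (deck-transformation) action of $G_L/H$, i.e.\ $f(ghg^{-1})=-f(h)$ whenever $\psi(g)=1$. Given such an $f$ one sets $\phi(h)=b^{f(h)}$ for $h\in H$ and $\phi(g_0)=a$ for a fixed meridian $g_0$ of a component of $L'$; the remaining relations reduce to $f(g_0^2)=0$ and $f(g_0hg_0^{-1})=-f(h)$, both immediate from anti-invariance and torsion-freeness of $\mathbb{Z}$.

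The heart of the argument is to produce such an $f$ that is nonzero on a meridian of $\ell_0$, and this is where $\det(L')\neq0$ enters. I would pass to the double branched cover $\Sigma=\Sigma_2(L')$ of $S^3$ along $L'$, with covering involution $\tau$. Because $lk(\ell_0,L')$ is even, $\ell_0$ lifts to two disjoint circles $\ell_0^+\sqcup\ell_0^-$ interchanged by $\tau$; because $\det(L')\neq0$, the group $H_1(\Sigma;\mathbb{Z})$ is finite, so the class $[\ell_0^+-\ell_0^-]$ is torsion and the rational linking number $f(\gamma):=lk_\Sigma(\gamma,\ell_0^+-\ell_0^-)\in\mathbb{Q}$ is defined for loops $\gamma$ in $\widetilde X:=\Sigma\setminus(\widetilde{L'}\cup\ell_0^+\cup\ell_0^-)$, where $\pi_1(\widetilde X)\cong H$. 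This $f$ is a homomorphism $H\to\mathbb{Q}$, it is anti-invariant because $\tau$ swaps $\ell_0^+$ and $\ell_0^-$, and a direct intersection count shows that on the meridian $m^+$ of $\ell_0^+$ it takes the value $\pm1\neq0$. Clearing denominators and dividing by the greatest common divisor of its values yields a surjective, anti-invariant $f\colon H\to\mathbb{Z}$ with $f(m^+)\neq0$.

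Finally I would assemble and verify the output. The extension $\phi$ built from this $f$ sends every meridian of $L'$ to a reflection $ab^{c}$ and the meridian $m_0$ of $\ell_0$—which lies in $H$ and lifts to $m^+$—to $b^{t}$ with $t=f(m^+)\neq0$, so reading off the colors of the Wirtinger arcs gives a $D_\infty$-coloring whose image meets both $\{ab^{i}\}$ and $\{b^{j}:j\neq0\}$ and omits $e$: it is two-tone. Surjectivity of $\phi$ is then immediate, since $f$ surjects onto $\langle b\rangle$ while a single reflection together with $\langle b\rangle$ generates $D_\infty$; that the combinatorial coloring rules are respected is exactly the assertion that $\phi$ is a homomorphism, which can alternatively be checked crossing-by-crossing from Lemma~\ref{lem21}. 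The main obstacle is the interplay of the two hypotheses in the second paragraph: the even linking number is what splits $\ell_0$ into two $\tau$-related lifts (without which no anti-invariant functional can be nonzero on its meridian), whereas $\det(L')\neq0$ is what makes $H_1(\Sigma)$ finite and hence the linking functional rational-valued and well defined. Everything else—well-definedness of the extension, the sign bookkeeping at positive versus negative crossings, and the final normalization—is routine.
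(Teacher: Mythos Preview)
Your argument is correct and follows the same overall architecture as the paper's proof: pass to the double branched cover $\Sigma=\Sigma_2(L')$, use evenness of $lk(\ell_0,L')$ to split the lift of $\ell_0$ into $\ell_0^+\sqcup\ell_0^-$, use $\det(L')\neq 0$ to make $H_1(\Sigma)$ finite, construct an anti-invariant homomorphism from $\pi_1$ of the complement to $\mathbb{Z}$, and then extend across the index-two subgroup to a surjection $G_L\to D_\infty$.

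The one substantive difference is in how the anti-invariant functional is produced. The paper writes down the Mayer--Vietoris sequence for $M=N\cup(M-\tilde K)$, extracts the short exact sequence $0\to H_1(\partial\tilde D)\to H_1(M-\tilde K)\to H_1(M)\to 0$, kills the $\varphi_*$-invariant class $x+y$, and then projects onto the free quotient. You instead write the functional explicitly as the rational linking number $\gamma\mapsto lk_\Sigma(\gamma,\ell_0^+-\ell_0^-)$, which is automatically anti-invariant because $\tau$ swaps $\ell_0^\pm$, and takes the value $1$ on the meridian $m^+$ by a direct intersection count with a meridional disc. Your version is more geometric and makes the nonvanishing on $m^+$ (hence the two-tone condition) immediate, whereas the paper's exact-sequence argument makes surjectivity and the equivariance bookkeeping cleaner but leaves the check that $f(x)\neq 0$ implicit. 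Either way the outputs agree up to scaling, and the subsequent extension step and verification of surjectivity are identical in both proofs.
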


\begin{proof}
\par Let $p: X \to S^3 - L'$ be the 
double covering on the total linking number with $L'$, and $\bar{p}: M \to S^3$ the 
double branched covering. 
Let $\tilde{K} = K_1 \cup K_2$ denote the inverse image $p^{-1}(\ell_0) \subset X$; because $lk(\ell_0, L')$ is even, $\tilde{K}$ is a $2$-component link in $X$ (or in $M$). 
We shall construct a surjective group homomorphism $\pi_1(M - \tilde{K}) \to \mathbb{Z}$ and extend the composition $\pi_1(X - \tilde{K}) \to \pi_1(M - \tilde{K}) \to \mathbb{Z}$ to obtain a $D_\infty$-coloring.
\par Taking a regular neighborhood $N$ of $\tilde{K}$, we consider the Mayer-Vietoris exact sequence for $M = N \cup (M - \tilde{K})$:
$$H_2(M) \to H_1(N - \tilde{K}) \to H_1(N) \oplus H_1(M - \tilde{K}) \to H_1(M) \to H_0(N - \tilde{K})$$
is exact. The rightmost map is zero as usual and the leftmost one is also zero because $\det (L') \neq 0$ (hence $|H_1(M)| = |\det (L')| < \infty$); by the Poincar\'{e} duality $H_2(M) \cong H^1(M; \mathbb{Z}) = 0.$ Thus, we obtain a short exact sequence
$$0 \to H_1(N - \tilde{K}) \to H_1(N) \oplus H_1(M - \tilde{K}) \to H_1(M) \to 0.$$
Take a meridional disc $D_1 \subset M$ of $K_1$ and let $D_2$ denote $\varphi(D_1)$, where $\varphi: M \to M$ is the nontrivial covering transformation of the branched covering $\bar{p}: M \to S^3$; the covering transformation group is $\mathbb{Z}_2 = \{{\rm id}_M, \varphi\}$. We denote $D_1 \cup D_2$ by $\tilde{D}$. Because the kernel of the surjective homomorphism $H_1(N - \tilde{K}) \to H_1(N)$ is the image of the injective map $H_1(\partial\tilde{D}) \to H_1(N - \tilde{K})$, the short exact sequence above shows that
\begin{equation}\label{es1}
0 \to H_1(\partial\tilde{D}) \to H_1(M - \tilde{K}) \to H_1(M) \to 0
\end{equation}
is also exact. We should remark that the involution $\varphi$ induces automorphisms $\varphi_*$ on the homology groups in (\ref{es1}). Since the homomorphisms in (\ref{es1}) are induced by the inclusions, (\ref{es1}) is compatible with $\varphi_*$; i.e., the maps are $\mathbb{Z}_2$-equivariant.
\par Let $x \in H_1(\partial D_1)$ be a generator and set $y =  \varphi_*(x) \in H_1(\partial D_2)$. We use the same symbols $x, y$ for their images in $H_1(\partial{D})$ or $H_1(M - \tilde{K})$. 
We take the quotient of (\ref{es1}) by the $\varphi_*$-invariant part of $H_1(\partial\tilde{D})$ 
to obtain an exact sequence 
$$0 \to H_1(\partial\tilde{D})/(x+y) \to H_1(M - \tilde{K})/(x+y) \to H_1(M) \to 0.$$
Since $H_1(\partial\tilde{D})/(x+y) \cong \mathbb{Z}$ and $|H_1(M)| < \infty$, the rank of $H_1(M - \tilde{K})/(x+y)$ equals $1$, i.e., $(H_1(M - \tilde{K})/(x+y))/{\rm Tor}(H_1(M -\tilde{K})/(x+y)) \cong \mathbb{Z}.$ 
Hence there exists a surjective homomorphism $f: H_1(M - \tilde{K})/(x+y) \to \mathbb{Z}$, which satisfies $f \circ \varphi_* = -f$. 
Let $\bar{f}: \pi_1(X - \tilde{K}) \to \mathbb{Z}$ denote the composition
$$\pi_1(X - \tilde{K}) \to \pi_1(M - \tilde{K}) \to H_1(M - \tilde{K}) \to H_1(M - \tilde{K})/(x+y) \to \mathbb{Z}.$$
\par Let $m \in G_L$ be a meridian of a component of $L'$. Identifying $\langle b \rangle \subset D_\infty$ with $\mathbb{Z}$, we define $\tilde{f}: G_L \to D_\infty$ by
$$\tilde{f}(g) = \left\{ \begin{array}{ll} \bar{f}(g) & (g \in \pi_1(X - \tilde{K})),\\ a \bar{f}(m^{-1} g) & (g \not\in \pi_1(X - \tilde{K})). \end{array} \right.$$
Since $a^2 = e$, $\tilde{f}$ is well-defined as a map. 
Furthermore, we have $\bar{f}(mgm^{-1}) = f \circ \varphi_* (g) = f(g)^{-1} = \bar{f}(g)^{-1} \in D_\infty$ for $g \in \pi_1(X - \tilde{K})$. 
By this equality, 
we can easily check that $\tilde{f}$ is a group homomorphism. 
Because $\bar{f}$ is surjective and $\tilde{f}(m) = a$, the homomorphism $\tilde{f}: G_L \to D_\infty$ is surjective.
\end{proof}

The following is an immediate corollary of the proposition above, since any knot has an odd determinant. 

\begin{corollary}\label{cor3}
Let $L = \ell_1 \cup \ell_2$ be a $2$-component link. 
If $lk( \ell_1, \ell_2)$ is even, 
then $L$ admits a two-tone $D_\infty$-coloring that induces a surjective homomorphism from $G_L$ to $D_\infty$.
\qed
\end{corollary}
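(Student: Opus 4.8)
The plan is to obtain the corollary as a direct specialization of Proposition~\ref{prop}, with no new machinery required. Since $L = \ell_1 \cup \ell_2$ has exactly two components, I would take the distinguished component to be $\ell_0 = \ell_1$, so that $L' = L - \ell_0 = \ell_2$ is a single-component link, i.e.\ a knot.

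It then remains only to verify the two hypotheses of Proposition~\ref{prop} for this choice. The condition that $lk(\ell_0, L') = lk(\ell_1, \ell_2)$ be even is exactly the assumption of the corollary. The condition $\det(L') \neq 0$ reduces to the statement that the determinant of the knot $\ell_2$ is nonzero, and this follows from the classical fact that every knot has odd determinant: writing $\det(\ell_2) = |\Delta_{\ell_2}(-1)|$ in terms of the Alexander polynomial and using $\Delta_{\ell_2}(-1) \equiv \Delta_{\ell_2}(1) = \pm 1 \pmod 2$, the value $\det(\ell_2)$ is odd and in particular nonzero. Equivalently, $\det(\ell_2)$ is the order of the finite group $H_1$ of the double branched cover of $\ell_2$, which is odd for a knot.

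Granting these two checks, Proposition~\ref{prop} applies verbatim and produces a two-tone $D_\infty$-coloring of $L$ that induces a surjective homomorphism $G_L \to D_\infty$, which is precisely the assertion. I expect no genuine obstacle in this argument: the only substantive input beyond Proposition~\ref{prop} is the observation that the nonvanishing hypothesis $\det(L') \neq 0$ becomes automatic once $L'$ is a single knot, and this is exactly the reason the corollary admits a one-line justification rather than a separate proof.
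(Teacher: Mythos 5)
Your proposal is correct and matches the paper's own justification exactly: the paper derives the corollary as an immediate consequence of Proposition~\ref{prop}, noting precisely that $L' = \ell_2$ is a knot and hence has odd (in particular nonzero) determinant. Nothing further is needed.
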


\section{Proof of theorems}\label{sec4}

In this section, we give proofs of the theorems stated in Introduction. 
To prove the theorems, we prepare the following two lemma. 

\begin{lemma}\label{lem1}
If a $2$-component link $L = \ell_1 \cup \ell_2$ is two-tone $D_n$-colorable for some odd  $n \ge 3$, then   
$lk( \ell_1, \ell_2)$ is even. 
\end{lemma}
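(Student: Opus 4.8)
The plan is to track the colors assigned to the two components and extract a parity constraint from the two-tone hypothesis. By Remark~\ref{rmk21}, for any $D_n$-coloring each component is monochromatic in tone: all arcs on $\ell_1$ lie in $A_n$ or all in $B_n$, and likewise for $\ell_2$. Since the coloring is two-tone, $\mathrm{Im}(\Gamma)$ meets both $A_n$ and $B_n$, and as $L$ has only two components, one component must be colored by $A_n$-colors (reflections) and the other by $B_n$-colors (rotations). Say $\ell_1$ is colored by $b_j$'s and $\ell_2$ by $a_i$'s; the goal is to show $lk(\ell_1,\ell_2)$ is even.

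First I would restrict attention to the crossings and see how the $B_n$-color on $\ell_1$ behaves as $\ell_1$ passes under arcs of $\ell_2$. The relevant local moves are governed by Lemma~\ref{lem21}: at a crossing where the over arc is a reflection $a_i$ (on $\ell_2$) and an under arc of $\ell_1$ carries $b_j$, part~(4) forces the outgoing under arc to carry $b_{n-j}$, i.e. the rotation index is negated, $j \mapsto n-j \equiv -j \pmod n$. At self-crossings of $\ell_1$ (both under and over arcs in $B_n$), Lemma~\ref{lem21}(2) shows the $B_n$-color is simply preserved. Thus, as we traverse $\ell_1$ once around, the rotation index is multiplied by $-1$ exactly once for each undercrossing of $\ell_1$ beneath $\ell_2$, and is unchanged at self-crossings. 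Following the index around the whole closed loop $\ell_1$ and returning to the starting arc, consistency demands that the total number of sign flips be even: the index returns to its original value $j$, but after $u$ undercrossings beneath $\ell_2$ it has become $(-1)^u j$, so we need $(-1)^u j \equiv j \pmod n$.

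The next step is to convert this into the statement about the linking number. The number $u$ of undercrossings of $\ell_1$ beneath $\ell_2$ has the same parity as the total number of crossings between $\ell_1$ and $\ell_2$ (each mixed crossing is an undercrossing of exactly one of the two strands, but since I am summing over a single component the relevant count is the number of times $\ell_1$ dives under $\ell_2$). The linking number $lk(\ell_1,\ell_2)$ is half the signed sum over all mixed crossings, and its parity is controlled by the parity of the number of mixed crossings; I would make this precise by noting $2\,lk(\ell_1,\ell_2)$ equals the signed count of all $\ell_1$–$\ell_2$ crossings, so the number of such crossings is even, and the number where $\ell_1$ is the underarc is exactly half of them in the diagrammatic bookkeeping. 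The crucial point is to argue that $u$ is even, and hence that the number of mixed crossings is even, forcing $lk(\ell_1,\ell_2)$ to be even.

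The main obstacle is the case where $j$ has order $2$ modulo $n$, i.e. $2j \equiv 0$, because then the flip $j \mapsto -j$ is invisible and the holonomy argument gives no constraint. Here is exactly where the hypothesis that $n$ is \emph{odd} enters: for odd $n$ the only solution of $2j \equiv 0 \pmod n$ is $j \equiv 0$, which is excluded since $b_0 = e \notin \mathrm{Im}(\Gamma)$ by the two-tone condition. Hence $-j \not\equiv j \pmod n$, so a single flip genuinely changes the color, and the consistency requirement $(-1)^u j \equiv j$ does force $u$ to be even. I would therefore emphasize that oddness of $n$ is used precisely to rule out the order-two rotation, and then conclude that $lk(\ell_1,\ell_2)$ is even.
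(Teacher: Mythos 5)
Your core argument is sound and is in essence the paper's own proof in a different dress: the paper first isotopes $L$ into a normal form in which all crossings between the $a$-colored and $b$-colored components are collected into vertical full-twist boxes (Figure~\ref{Fig31}) and then applies Lemma~\ref{lem22} box by box, whereas you work on an arbitrary diagram and track the rotation index crossing by crossing via Lemma~\ref{lem21}(2) and~(4). Both proofs hinge on exactly the same two facts: each passage of the $b$-colored component under the $a$-colored one negates the rotation index $j\mapsto -j$, and for odd $n$ the congruence $-j\equiv j\pmod n$ forces $j\equiv 0$, which is excluded by two-tonality. Your version has the small advantage of not relying on the normal form, which the paper only asserts one ``can easily see.''

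However, your last step, converting ``$u$ is even'' into ``$lk(\ell_1,\ell_2)$ is even,'' is garbled as written. The claim that the crossings at which $\ell_1$ is the under arc constitute exactly half of all mixed crossings is false in general (push a strand of $\ell_1$ under $\ell_2$ by a Reidemeister~II move: $u$ grows by $2$ while the number of crossings where $\ell_1$ is the over arc is unchanged), and the inference ``the number of mixed crossings is even, forcing $lk(\ell_1,\ell_2)$ to be even'' is also invalid, since the number of mixed crossings is even for every diagram of every two-component link (the standard Hopf link diagram has two mixed crossings and linking number $1$). What you actually need is the standard one-sided crossing formula $lk(\ell_1,\ell_2)=\sum_c \varepsilon(c)$, the sum running over only those crossings at which $\ell_1$ passes under $\ell_2$; reducing mod $2$ gives $lk(\ell_1,\ell_2)\equiv u\pmod 2$ directly, which combined with your holonomy argument finishes the proof. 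With that one-line repair the argument is correct.
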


\begin{proof}
Take a two-tone $D_n$-coloring $\gamma$ on a diagram of $L$ for some $n \ge 3$. 
Since $\gamma$ is two-tone, one component of $L$ is colored by $a_i$'s, and the other by $b_j$'s. 
Let $\ell_b$ be the component of $L$ such that each arc in a diagram of $\ell_b$ is colored by $b_j$'s by $\Gamma$. 
This $\ell_b$ is well-defined for $\Gamma$ independent of the choice of a diagram. See Remark~\ref{rmk21}. 

We can easily see that $L$ admits a diagram as depicted in Figure~\ref{Fig31}, 
where $D_b$ is a sub-diagram corresponding to $\ell_b$, $D_a$ is the remaining sub-diagram, and each box between $D_a$ and $D_b$ contains a vertical full twist (Figure~\ref{Fig31} (right)). 
For this $D_a \cup D_b$, we consider the arcs $\beta$ and $\beta'$ which are connected in $D_b$ as in Figure~\ref{Fig31} (left). 

\begin{figure}[htbt]
\centering
  {\unitlength=1mm
  \begin{picture}(90,48)
   \put(0,0){
  \includegraphics[width=.75\textwidth]{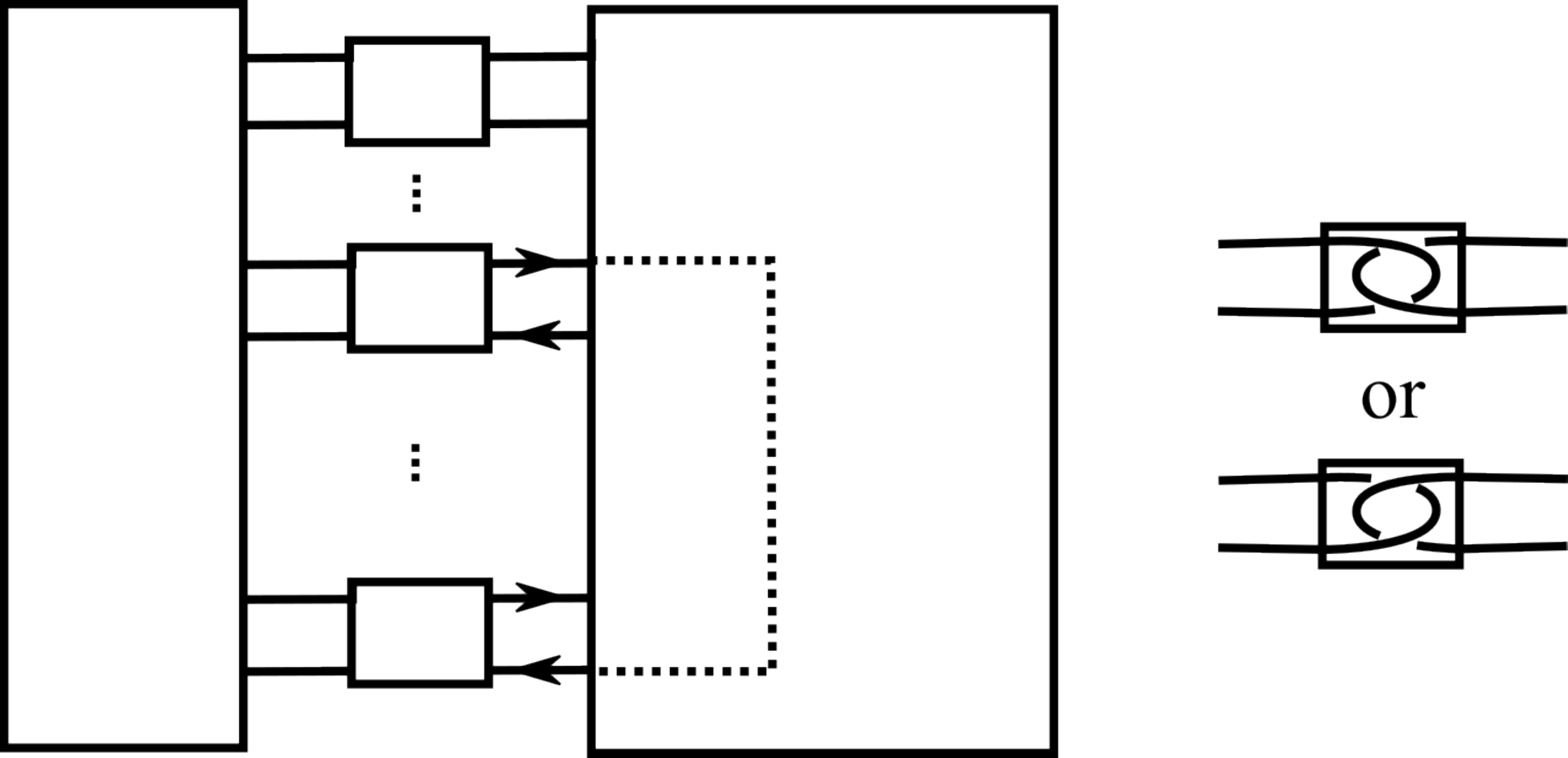}}
   \put(6,34){\large $D_a$}
   \put(45,34){\large $D_b$}
   \put(32,31){$\beta$}
   \put(32,1){$\beta'$}
  \end{picture}}
\caption{The diagram of $L$}\label{Fig31}
\end{figure}

Since $\beta$ and $\beta'$ are connected in $D_b$, we see $\Gamma(\beta) = \Gamma(\beta')$ by Lemma~\ref{lem21}(1). 
On the other hand, letting $N$ be the number of the boxes (full twists) which $\ell_b$ runs through, if $\Gamma(\beta) = \Gamma(\beta')$, then $N$ has to be even. 
This is shown by applying Lemma~\ref{lem22} repeatedly for each box (full twist) together with $n$ is odd. 
The number $N$ is congruent to 
$lk (\ell_a , \ell_b)$ modulo 2, and so the lemma holds. 
\end{proof}

\begin{remark}\label{Remark5}
The lemma above can be extended as follows. 
If $L$ is two-tone $D_n$-colorable for some odd  $n \ge 3$, then the sublink $L_b$ of $L$ consisting of those components which are colored by $b_j$'s satisfies that, for every component $\ell \subset L_b$, $lk (\ell , L - L_b)$ is even.
\end{remark}

\begin{lemma}\label{lem3}
Let $L = \ell_1 \cup \ell_2$ be a $2$-component link. 
If $\det (L) =0$, then $lk( \ell_1, \ell_2)$ is even. 
\end{lemma}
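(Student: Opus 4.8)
The plan is to prove the sharper congruence
\[
\det(L) \equiv 2\, lk(\ell_1,\ell_2) \pmod 4,
\]
from which the lemma follows at once: if $\det(L)=0$, then $2\,lk(\ell_1,\ell_2)\equiv 0 \pmod 4$, forcing $lk(\ell_1,\ell_2)$ to be even. To establish the congruence I would work with a Seifert matrix. Choose a connected Seifert surface $F$ for $L$ of genus $g$, so that $H_1(F)\cong \mathbb{Z}^{2g+1}$ and $\det(L)=|\det(V+V^{T})|$ for a Seifert matrix $V$. The crucial point is to pick a basis adapted to the boundary: take $2g$ interior generators $a_1,\dots,a_{2g}$ forming a symplectic basis for the intersection form $V-V^{T}$, together with one boundary-parallel generator $c$ representing the class $[\ell_1]=-[\ell_2]$, realized as a push-off of $\ell_1$ into the interior of $F$. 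Write $S=V+V^{T}$ in block form with $c$ as the last coordinate.

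Next I would analyze $S$ modulo $2$ and modulo $4$. Since $c$ is boundary-parallel it lies in the radical of the intersection form, so $(V-V^{T})_{a_i,c}=0$; hence $V_{a_i,c}=V_{c,a_i}$ and the coupling entries $S_{a_i,c}=2V_{a_i,c}$ are \emph{even}, while the corner entry is $S_{cc}=2V_{cc}$. On the interior block $S'$ one has $S'\equiv V-V^{T}\pmod 2$, the standard symplectic form, which is nondegenerate mod $2$, so $\det(S')$ is odd. A Schur-complement (cofactor) expansion along the last row and column then gives $\det(S)=2V_{cc}\det(S')-4\,w^{T}\operatorname{adj}(S')\,w$ for an integer vector $w$, and because $\det(S')$ is odd this yields $\det(S)\equiv 2V_{cc}\pmod 4$.

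Finally I would identify the distinguished entry. Here $V_{cc}=lk(c,c^{+})$ is the surface framing of the boundary-parallel curve $c$, and a standard computation shows it equals the linking number: the surface push-off $\ell_1^{F}$ is homologous to $-\ell_2$ in $S^3-\ell_1$, so $V_{cc}=-lk(\ell_1,\ell_2)$. Combining the steps, $\det(L)\equiv 2V_{cc}\equiv 2\,lk(\ell_1,\ell_2)\pmod 4$, which proves the congruence and hence the lemma. (As a sanity check, the Hopf link has $g=0$, empty interior block, and $\det=|2V_{cc}|=2$ with $lk=1$, consistent with $\det\equiv 2\pmod 4$.)

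I expect the main obstacle to be the careful mod-$2$/mod-$4$ bookkeeping: verifying that the off-diagonal coupling entries are genuinely divisible by $2$, not merely zero mod $2$, and justifying the Schur-complement identity over $\mathbb{Z}$ using that $\det(S')$ is a unit at the prime $2$. The other delicate point is the sign-free identification of $V_{cc}$ with $-lk(\ell_1,\ell_2)$ through the surface framing of a boundary-parallel curve, which must be checked independently of orientation conventions; fortunately only its parity enters the final argument.
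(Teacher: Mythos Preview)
Your argument is correct and in fact proves the sharper statement $\det(L)\equiv 2\,lk(\ell_1,\ell_2)\pmod 4$, but it is a genuinely different proof from the paper's. The paper argues diagrammatically with Fox $4$-colorings: since $\det(L)=0$ there is a surjective Fox $4$-coloring, and one checks that the arcs of $\ell_1$ receive colors from $\{1,3\}$ while those of $\ell_2$ receive colors from $\{0,2\}$ (or vice versa); the color on an under-arc flips precisely at crossings where the over-strand lies on the other component, so the number of such crossings --- hence $lk(\ell_1,\ell_2)$ --- is even. This is short, elementary, and stays entirely within the coloring framework that is the paper's theme. Your Seifert-matrix route is more algebraic and yields the full mod-$4$ congruence; in fact the paper records in a closing remark that the referee pointed out exactly this stronger conclusion, deriving it instead from the Torres formula $\Delta_L(1,1)=\pm lk(\ell_1,\ell_2)$ together with $\det(L)=2|\Delta_L(-1,-1)|$. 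So your approach sits between the paper's diagrammatic proof and the referee's Alexander-polynomial proof, reaching the same strengthened conclusion as the latter by a direct Seifert-form computation. One cosmetic point: your phrase ``genuinely divisible by $2$, not merely zero mod $2$'' is a tautology; what you actually need (and establish) is that the coupling vector is $2w$ with $w$ integral, so that the Schur correction term picks up a factor of $4$.
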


\begin{proof}
Let $D$ be a diagram of $L$. 
Since $\det(L) = 0$, there exists a Fox $4$-coloring $\Gamma$ on $D$ which induces a surjective group homomorphism to $D_4$. 
By definition of Fox colorings, if $\Gamma(x)$ equals $a_0$ or $a_2$ (resp. $a_1$ or $a_3$) for an arc $x$ belonging to $\ell_i \; (i = 1,2)$, it holds for any arc $x$ of $\ell_i$. 
Then, we may assume
$$\Gamma(\{\text{arcs of $\ell_1$}\}) \subset \{a_1, a_3\} \quad \text{and} \quad \Gamma(\{\text{arcs of $\ell_2$}\}) \subset \{a_0, a_2\}.$$
For a crossing point of $D$, let $x$ be the over arc and $y, z$ the under arcs. 
Again, by definition of Fox colorings, we find that $\Gamma(y) = \Gamma(z)$ holds if and only if $x$ and $y$ belong to the same component. 
In particular, the colors of the under arcs at the crossing are changed if $x$ belongs to $\ell_1$ and $y$ to $\ell_2$. 
This implies that $D$ has an even number of such crossings, and hence the linking number $lk(\ell_1, \ell_2)$ is even.
\end{proof}

\begin{proof}[Proof of Theorem~\ref{Thm1}]
Let $L = \ell_1 \cup \ell_2$ a 2-component link. 
We show that all the following are equivalent. 
\begin{itemize}
\item[(i)] $lk( \ell_1, \ell_2)$ is even. 
\item[(ii)] $L$ is two-tone $D_n$-colorable for some odd  $n \ge 3$. 
\item[(iii)] $L$ is two-tone $D_\infty$-colorable.
\item[(iv)] The link group $G_L$ admits a surjective homomorphism to $D_n$ for every $n \ge 3$.
\item[(v)] The link group $G_L$ admits a surjective homomorphism to $D_\infty$.
\end{itemize}

We see that (i)$\Rightarrow$(iii) follows from Corollary~\ref{cor3} and (ii)$\Rightarrow$(i) follows from Lemma~\ref{lem1}. 

\medskip

\noindent
\underline{(iii)$\Rightarrow$(ii):} 
Suppose that $L$ is two-tone $D_\infty$-colorable, that is, a diagram of $L$ admits a two-tone $D_\infty$-coloring. 
Since there is a surjection from $D_\infty$ to $D_n$ for every $n \ge 3$ defined by $a \in D_\infty \mapsto a \in D_n$ and $b \in D_\infty \mapsto b \in D_n$, this implies that the diagram of $L$ admits a $D_n$-coloring for every $n$. 
By taking odd $n$ sufficiently large, the $D_n$-coloring uses at least two colors from $a_i$'s. 
Furthermore, by retaking $n$ to satisfy $ (n, \det(L))=1, (n, \det(\ell_1))=1$, and $(n,\det(\ell_2))=1$ if necessary, the coloring cannot come from Fox $n$-colorings on $L$, $\ell_1$, or $\ell_2$. Thus the coloring has to be two-tone, and so, $L$ is two-tone $D_n$-colorable for some odd  $n \ge 3$. 

\medskip

We also see that (i)$\Rightarrow$(v) follows from Corollary~\ref{cor3}. 

\medskip

\noindent
\underline{(v)$\Rightarrow$(iv):} 
By the surjection from $D_\infty$ to $D_n$ for every $n \ge 3$ defined as above, if the link group $G_L$ admits a surjective homomorphism to $D_\infty$, then the link group $G_L$ admits a surjective homomorphism to $D_n$ for every $n \ge 3$. 

\medskip

\noindent
\underline{(iv)$\Rightarrow$(i) or (ii):} 
Suppose that the link group $G_L$ admits a surjective homomorphism to $D_n$ for every $n \ge 3$.
Such a surjective homomorphism induces a $D_n$-coloring on a diagram of $L$ for $n \ge 3$ by considering the Wirtinger generators for the diagram. 
If $\det (L) =0$, then $lk( \ell_1, \ell_2)$ is even by Lemma~\ref{lem3}, and so (i) holds. 
If $\det (L) \ne 0$, then for some odd $n$ which is coprime to $\det(L), \det(\ell_1), \det(\ell_2)$, the $D_n$-coloring does not come from a Fox $n$-coloring, and so, it has to be two-tone. 
This implies (ii). 
\end{proof}

\begin{proof}[Proof of Theorem~\ref{Thm2}]
Let $L = \ell_1 \cup \ell_2$ be a 2-component link with $lk( \ell_1,\ell_2)$ is odd.

\noindent
(i) Then $L$ admits no two-tone $D_n$-colorings for any $n \ge 3$ by Theorem~\ref{Thm1} (by the contraposition of (ii)$\Rightarrow$(i)). 

\noindent
(ii)
By (i), if the link group $G_L$ admits a surjective homomorphism to $D_n$ for $n \ge 3$, then it is not induced from two-tone $D_n$-colorings. 
That is, the homomorphism must send Wirtinger generators to either the trivial element and reflections in $D_n$ or the trivial element and rotations in $D_n$. 
However, the latter cannot be surjective, and so, it is impossible. 
Therefore the homomorphism sends Wirtinger generators to either the trivial element and reflections in $D_n$. 
Such a homomorphism is induced from a Fox $n$-coloring on $\ell_1$, $\ell_2$ or $L$. 
\end{proof}

\begin{proof}[Proof of Theorem~\ref{Thm3}]
Let $L$ be a link with at least 3 components. 
We show that $G_L$ admits a surjective homomorphism to $D_n$. 

Consider sub-links of 2 components in $L$. 
If some of them, say $L' = \ell'_1 \cup \ell'_2$, satisfies that $lk (\ell'_1, \ell'_2)$ is even, then by Theorem~\ref{Thm2}, $G_{L'}$ admits a surjective homomorphism to $D_n$ and $L'$ is two-tone $D_n$-colorable for $n$. 
It follows that $G_L$ admits a surjective homomorphism to $D_n$ via a surjection $G_L \to G_{L'}$ and $L$ is two-tone $D_n$-colorable. 

Suppose that for all the 2 component sub-links of $L$, the linking numbers of the two components are odd. 
Then, by Lemma~\ref{lem3}, no such links have the determinant 0. 
Since $L$ has at least 3 components, we can consider a sub-link of $L$ with 3 components, say $L' = \ell_1 \cup \ell_2 \cup \ell_3$. 
For this link, $lk (\ell_1, \ell_2 \cup \ell_3)$ is even and $\det ( \ell_2 \cup \ell_3) \ne 0$ holds. 
Then, by Proposition~\ref{prop}, $G_{L'}$ admits a surjective homomorphism to $D_\infty$ and so a surjective homomorphism to $D_n$ for every $n \ge 3$. 
This implies that $G_L$ admits a surjective homomorphism to $D_n$ for every $n \ge 3$. 
\end{proof}

\begin{proof}[Proof of Corollary~\ref{CorG}]
Suppose that $L$ is two-tone $D_m$-colorable for some odd  $m \ge 3$. 
If $L$ is a link with 2 components, then $G_L$ admits a homomorphism to $D_n$ for every $n \ge 3$ by Theorem~\ref{Thm2} ((ii)$\Rightarrow$(iv)).  
If $L$ has at least 3 components, then $G_L$ admits a homomorphism to $D_n$ for every $n \ge 3$ by Theorem~\ref{Thm3}.  

Suppose that $G_L$ admits a surjective homomorphism to $D_n$ for $n \ge 3$. 
Then there is a $D_n$-coloring on a diagram of $L$. 
See Remark~\ref{rmk2}. 
If the coloring uses two-tone colors, 
then $L$ 
contains a two-tone $D_n$-colorable sub-link. 
Otherwise, since the homomorphism is surjective, the coloring comes from a nontrivial Fox $n$-coloring on a diagram of a sub-link of $L$ as in the proof of Theorem~\ref{Thm2}. 
 \end{proof}

\begin{remark}
For the proof of Lemma~\ref{lem3}, it is pointed out by the anonymous referee that the lemma is a direct consequence of the following two well-known formulas for Alexander polynomial $\Delta_L$:
\begin{itemize}
\item
$\Delta_L (1,1) = \pm lk (\ell_1, \ell_2)$ for a link $L = \ell_1 \cup \ell_2$ (\cite{Torres1953})
\item
$\det(L) = 2 | \Delta_L (-1, -1)|$ (\cite[Theorem 1]{HosokawaKinoshita1960}). 
\end{itemize}
(The second formula is a generalization of the Fox formula and a special case of the Mayberry-Murasugi formula \cite{MayberryMurasugi1982}, whose simple proof is given by Porti \cite{Porti2004}.) 
Moreover, the two formulas imply the stronger conclusion that $lk( \ell_1, \ell_2) \equiv 0 \pmod 2$ if and only if $\det(L) \equiv 0 \pmod 4$.
\end{remark}

\section{Finding two-tone colorings}

\begin{proof}[Proof of Proposition~\ref{prop1}]
Suppose that there exists a trivial component $\ell_0$ of a link $L$ and, for every component $\ell \subset L - \ell_0$, $lk (\ell_0 , \ell)$ is even. 
If a diagram of a link $L$ admits a two-tone $D_n$-coloring  for every odd $n \geq 3$ which assigns the arcs on $\ell_0$ to $a_i$'s and the other arcs to $b_j$'s, then so does any diagram of $L$. 
Thus, to prove the proposition, it suffices to show that a particular diagram of $L$ admits such a $D_n$-coloring. 

Now we take a diagram $D$ of $L$ depicted in Figure~\ref{Fig511}. 
In the figure, $D_0$ is a sub-diagram corresponding to $\ell_0$, which is a trivial knot diagram, and each box between $D_0$ and the remaining sub-diagram $D_b$ contains a vertical full twist (see Figure~\ref{Fig31} (right)).

\begin{figure}[htbt]
\centering
  {\unitlength=1mm
  \begin{picture}(100,60)
   \put(20,0){\includegraphics[width=.44\textwidth]{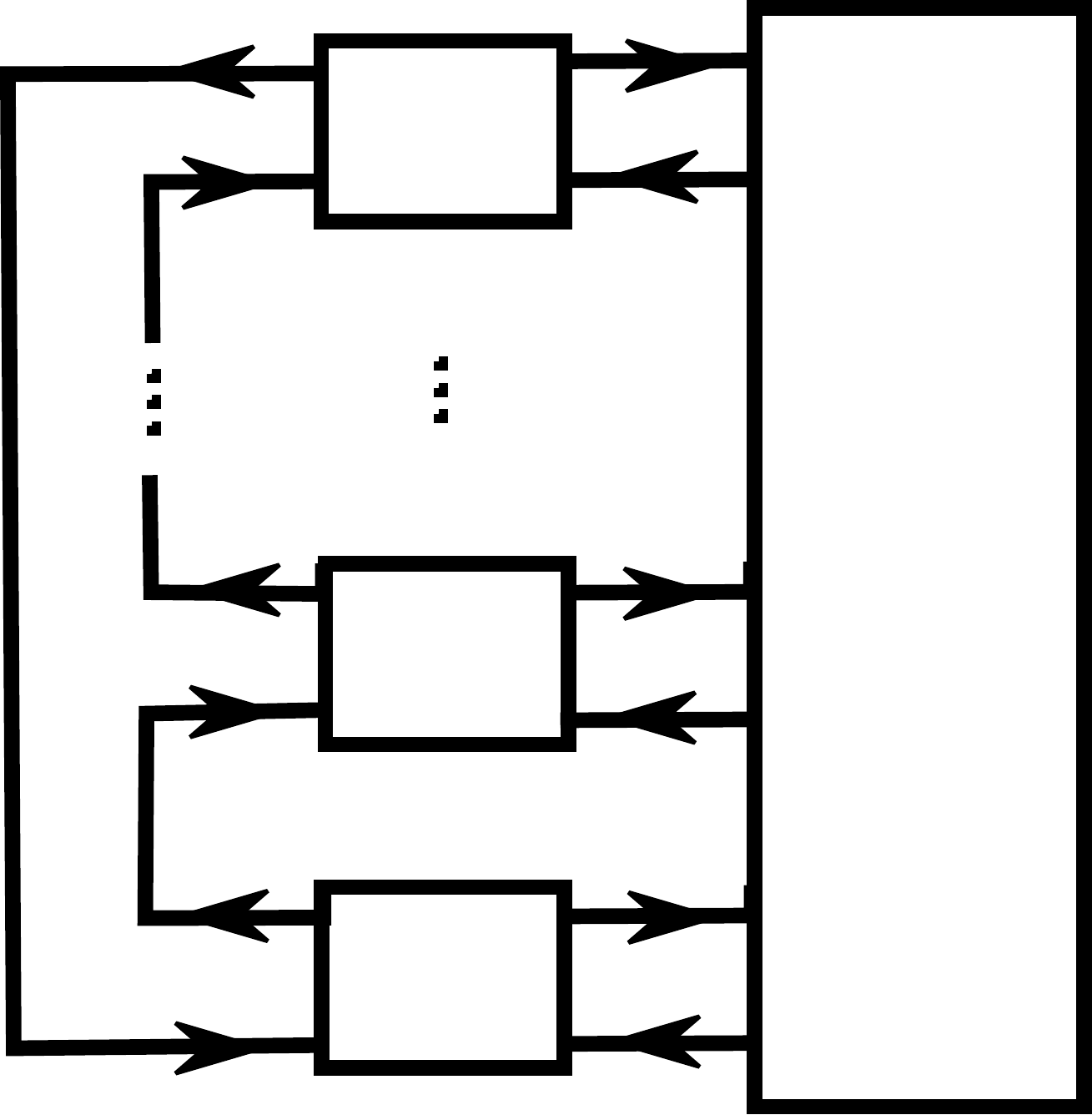}}
   \put(32,0){$\alpha$}
   \put(15,25){$D_0$}
   \put(62,30){$D_b$}
  \end{picture}}
\caption{The diagram D. Each box in the center contains a full twist}\label{Fig511}
\end{figure}

Consider the arc $\alpha$ in the figure, take an arc $\beta_i$ from each component of $L - \ell_0$, and assign $a_0$ to $\alpha$ and $b_1$ to $\beta_i$'s.
Let us show that this assignment induces a two-tone $D_n$-coloring. 

For the arc $\beta_i$, let $\ell$ be the component of $L-\ell_0$ containing $\beta_i$. 
Since $lk (\ell_0 , \ell)$ is even for every component $\ell \subset L - \ell_0$, due to Lemma~\ref{lem22}, the assigning $\beta_i$ to $b_1$ induces a $D_n$-coloring on $\ell$. 
In the same way, we can find a $D_n$-coloring on $L-\ell_0$. 

Note that, on the sub-diagram corresponding to each component of  $L-\ell_0$, 
an arc in the lower right of a box in the center is colored by $b_1$ or $b_{n-1}$. 
In particular, when the arc in the lower right is colored by $b_1$, then the arc in the upper right is colored in $b_{n-1}$, and vice versa. 

Thus, by Lemma~\ref{lem22}, for each component of $L - \ell_0$, the number of the boxes in the center with the arc in the lower right colored by $b_1$ is equal to the number of those with the arc colored in $b_{n-1}$. 

Let $m$ be the half of the linking number $lk(\ell_0, L - \ell_0)$. 
(Note that $lk(\ell_0, L - \ell_0)$ must be even, since $lk (\ell_0 , \ell)$ is even for each component $\ell$ of $L - \ell_0$.) 
Then the number of the boxes in the center with the arc in the lower right colored by $b_1$ is $m$ and the number of those with the arc colored in $b_{n-1}$ is also $m$.

Again by Lemma~\ref{lem22}, assigning $\alpha$ to $a_0$ induces assigning the arc in the upper left of the top box in center to $a_{0 - 2 ( m \cdot 1 + m \cdot (-1) )} = a_0$.
This implies that the assignment induces a $D_n$-coloring on the whole diagram. 
By construction, the $D_n$-coloring is obviously two-tone. 

Thus any diagram of $L$ admits a two-tone $D_n$-coloring  for every odd $n \geq 3$ which assigns the arcs on $\ell_0$ to $a_i$'s and the other arcs to $b_j$'s.
\end{proof}

\section*{Acknowledgement}
The authors would like to thank the anonymous referee for many useful comments and suggestions. 

\appendix

\section{}

The following was given by the anonymous referee for unifying and generalizing some of the arguments and results. 
The basic idea behind the approach is essentially identical with that of the proof of the key Proposition~\ref{prop}. 
However, it is quite different from the approach in the other parts. 

Recognized as before, 
a $D_n$-coloring ($n \in  \mathbb{Z}_{\ge 2} \cup \{ \infty \}$) of a link diagram $D$ representing a link $L$ is nothing other than a homomorphism, $\gamma$, from the link group $G_L := \pi_1 ( S^3 - L )$ to the dihedral group
\[ D_n = \langle a , b \mid a^2 , b^n , ( a b )^2 \rangle \cong \langle b \mid b_n \rangle \rtimes \langle a \mid a^2 \rangle, \]
that maps every meridian to a nontrivial element. 
Let $\nu : D_n \to \langle a \mid a^2 \rangle$ be the natural epimorphism. 
Then the coloring $\gamma$ corresponds to a Fox coloring or a two-tone coloring according to whether (i) $\nu \gamma$ maps every meridian to the generator $a$ or (ii) $\nu \gamma$ maps some meridian to $a$ and some meridian to the trivial element. 

Study of dihedral representations, more generally metableian representations, of link groups has a long history. In particular, a natural and useful viewpoint can be found in Hartley's article (\cite{Hartley1979}). 
The proof of the key Proposition~\ref{prop} fits this viewpoint. 
On the other hand, for Lemma~\ref{lem1} and Proposition~\ref{prop1}, which are intimately related with Proposition~\ref{prop}, the author give diagrammatic proofs, which have almost no relation with the proof of Proposition~\ref{prop}. 
Here, the following present unified proofs and generalizations of these results. 

\bigskip

\noindent
\textit{Homological proof of a generalization of Lemma~\ref{lem1} given in Remark~\ref{Remark5}}. 
By the assumption of the lemma, $G_L$ admits a two-tone $D_n$-representation $\gamma : G_L \to D_n$. 
Let $L_a$ and $L_b$ be the sublink of $L$ consisting of the components whose meridians are
mapped by $\nu \gamma$ to $a$ or 1, respectively. 
Since $\gamma$ maps the meridians of $L_a$ to order 2 elements, it descends to a homomorphism, which we
continue to denote by $\gamma$, from the quotient of $G_L$ by the normal closure of the squares of meridians of $L_a$. 
The latter group is the orbifold fundamental group of the orbifold, $\mathcal{O}$, with underlying space $S^3 - L_b$ with singular set $L_a$ of index 2. 
The double covering of $\mathcal{O}$ associated with the homomorphism $\nu \gamma : \pi_1^{orb} ( \mathcal{O}) \to \langle a \mid a^2 \rangle$ is the manifold $M - \tilde{L}_b$ 
where $M$ is the double branched covering of $S^3$ branched over $L_a$ and $\tilde{L}_b$ is the inverse image of $L_b$ in $M$. 
The fundamental group $\pi_1 ( M - \tilde{L}_b )$ is identified with the index 2 subgroup $\ker ( \nu \gamma )$ of $\pi_1^{orb} ( \mathcal{O})$, and the homomorphism $\gamma : \pi_1^{orb} ( \mathcal{O}) \to D_n$ restricts to an abelian representation $\tilde{\gamma} :\pi_1 ( M - \tilde{L}_ b ) \to \langle b \mid b^n \rangle < D_n$.

Now suppose to the contrary that there is a component $\ell$ of $L_b$ with $lk( \ell , L_a)$ odd. 
Then the inverse image $\tilde{\ell}$ of $\ell$ in $M - \tilde{L}_b$ is connected. 
Thus any two meridians of $\ell$, regarded as elements of $\pi_1 ( M - \tilde{L}_b)$, are conjugate in $\pi_1 ( M - \tilde{L}_b)$, and so their images by $\gamma$, which are equal to the images by the abelian homomorphism $\tilde{\gamma}$, are identical in $\langle b \mid b^n \rangle < D_n$. 
However, this is impossible, because for a meridian $\mu_\ell$ of $\ell$ and for a meridian $\mu_a$ of a component of $L_a$, we have $\gamma ( \mu_a \mu_\ell \mu_a^{-1} )=  \gamma ( \mu_\ell)^{-1}
\ne \gamma( \mu_\ell )$, though $\mu_a \mu_\ell \mu_a^{-1}$ is also a meridian of $\ell$. 
(Here the inequality follows from the assumption that $n \ge 3$ is odd.) \qed

\bigskip

Though the above proof is lengthy, it ties up with the proof of Proposition~\ref{prop} and it leads to a simple proof of the following generalization of Proposition~\ref{prop1}.

\begin{proposition} 
Let $L = L_0 \cup L_1$ be a link in $S^3$ satisfying the following conditions.
\begin{itemize}
\item[(1)] $\det ( L_0 )  = 1$.
\item[(2)] $L_1$ is non-empty, and every component of $L_1$ has an even linking number with $L_0$.
\end{itemize}
Then there is a two-tone epimorphism from $G_L$ to $D_\infty$ for which $L_a = L_0$ and $L_b = L_1$, 
where $L_a$ and $L_b$ are the sublinks of $L$ as in the ``homological proof''. 
\end{proposition}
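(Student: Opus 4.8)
The plan is to specialize the branched-covering construction of Proposition~\ref{prop} to this setting, where hypothesis (1) forces the relevant branched cover to be a homology sphere and thereby removes all of the torsion bookkeeping that the general argument required. First I would form the double branched covering $\bar p : M \to S^3$ branched over the prospective reflection sublink $L_a = L_0$, together with its nontrivial covering involution $\varphi$. Since $\det(L_0) = 1$ we have $|H_1(M)| = |\det(L_0)| = 1$, so $M$ is an integral homology $3$-sphere. Set $\tilde L_1 = \bar p^{-1}(L_1)$. Condition (2) says that every component $\ell$ of $L_1$ has even linking number with the branch locus $L_0$, so the meridian of $\ell$ lies in the kernel of the mod-$2$ linking homomorphism; hence $\ell$ lifts to two components $\tilde\ell$ and $\varphi(\tilde\ell)$, and $\varphi$ acts on $\tilde L_1$ as a free involution that swaps the two lifts of each component.

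Next I would compute $H_1(M - \tilde L_1)$ by the Mayer--Vietoris argument used in Proposition~\ref{prop}. Because $M$ is a homology sphere, the outer terms of the sequence vanish, so if $\tilde L_1$ has $c$ components then $H_1(M - \tilde L_1) \cong \mathbb{Z}^c$, freely generated by the meridians. Organizing this basis into the pairs $\{\mu_\ell, \varphi_*\mu_\ell\}$ indexed by the components $\ell$ of $L_1$, I would define $f : H_1(M - \tilde L_1) \to \mathbb{Z}$ by $f(\mu_\ell) = 1$ and $f(\varphi_*\mu_\ell) = -1$. This $f$ is visibly surjective, and by construction it satisfies $f \circ \varphi_* = -f$ on the whole group. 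This is precisely the payoff of hypothesis (1): in Proposition~\ref{prop} one had to pass to the quotient by the $\varphi_*$-invariant class $x+y$ and discard torsion in order to manufacture an anti-equivariant surjection, whereas here both surjectivity and the relation $f\circ\varphi_* = -f$ fall out immediately from the free meridian basis and the free swap.

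Finally I would assemble the $D_\infty$-representation exactly as in Proposition~\ref{prop}. The mod-$2$ linking with $L_0$ defines an epimorphism $G_L \to \langle a \mid a^2 \rangle$, which will play the role of $\nu\gamma$; its kernel is $\pi_1(X - \tilde L_1)$, where $X \to S^3 - L_0$ is the unbranched double cover. Composing the surjection $\pi_1(X - \tilde L_1) \twoheadrightarrow \pi_1(M - \tilde L_1)$ with abelianization and with $f$ yields $\bar f : \ker(\nu\gamma) \to \mathbb{Z}$, which I identify with $\langle b \rangle < D_\infty$. Choosing a meridian $m$ of a component of the branch locus $L_0$ and setting $\gamma(g) = \bar f(g)$ for $g \in \ker(\nu\gamma)$ and $\gamma(g) = a\,\bar f(m^{-1}g)$ otherwise defines the candidate map; the identity $\bar f(mgm^{-1}) = \bar f(g)^{-1}$, which is just $\bar f\circ\varphi_* = -\bar f$ once one knows conjugation by $m$ induces $\varphi_*$, shows $\gamma$ is a homomorphism, while $\gamma(m) = a$ and the surjectivity of $\bar f$ show $\gamma$ is onto $D_\infty$. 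By construction every meridian of $L_0$ maps to a reflection and every meridian of $L_1$ maps to $b^{\pm 1}$, a nontrivial rotation, so $\gamma$ is two-tone with $L_a = L_0$ and $L_b = L_1$, as required.

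I expect the only genuinely delicate point to be the verification that conjugation by the chosen meridian $m$ really does realize the covering involution $\varphi_*$ on $\ker(\nu\gamma) = \pi_1(X - \tilde L_1)$, so that anti-equivariance of $f$ translates into the homomorphism property of $\gamma$, together with the compatibility of the surjection $\pi_1(X - \tilde L_1) \twoheadrightarrow \pi_1(M - \tilde L_1)$ with $\varphi_*$. These are exactly the checks already carried out in the proof of Proposition~\ref{prop}, so I would invoke that argument rather than repeat it, and the bulk of the new work reduces to the clean homology computation above. It is worth noting that hypothesis (1) is imposed precisely to avoid the harder scenario: had one only assumed $\det(L_0)\neq 0$, then $H_1(M)$ would be finite but nonzero, and one would again have to run the quotient-by-$(x+y)$ device and confirm that torsion does not obstruct the existence of a surjective anti-equivariant $f$ — that is where the real difficulty would lie.
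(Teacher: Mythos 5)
Your proposal is correct and follows essentially the same route as the paper's proof: both rest on the observation that hypotheses (1) and (2) make $H_1(M-\tilde L_1)$ a free abelian group on meridian pairs swapped by the covering involution, from which the anti-equivariant surjection $\mu_i\mapsto 1$, $\varphi_*\mu_i\mapsto -1$ and the extension by $m\mapsto a$ yield the two-tone epimorphism onto $D_\infty$. The only difference is packaging — the paper phrases the final step as an epimorphism from the semidirect product $Q = H_1(M-\tilde L_1)\rtimes\langle a\mid a^2\rangle$ (a quotient of $G_L$) onto $D_\infty$, whereas you re-run the explicit construction from Proposition~\ref{prop} — which is immaterial.
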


\begin{proof}
Let $M$ be the double branched covering of $S^3$ branched along $L_0$ and $\tilde{L}_1$ the inverse image of $L_1$ in $M$. 
The assumptions imply that $H_1 ( M - \tilde{L}_1 )$ is a free abelian group with basis $\{ \mu_i , \mu'_i  \mid 1 \le i \le r \}$ such that the homomorphism $\tau$ induced by the covering translation switches $\mu_i$ with $\mu'_i$ for each $i$. 
(Here $r$ is the number of components of $L_2$, $\mu_i$ and $\mu'_i$ are meridians of the components of $\tilde{L}_1$ that are mapped to the $i$-th component of $L_2$.) 
Let $Q$ be the semi-direct product of $H_1 ( M - \tilde{L}_1)$ with the order 2 cyclic group  $\langle a \mid a^2 \rangle$, where the action of the latter group on the first group is given by $\tau$. 
Then $Q$ is a quotient of the link group $G_L$. 
(In fact it is a quotient of the orbifold fundamental group of the orbifold $\mathcal{O}$ with underlying space $S^3 - L_1$ with singular set $L_0$ of index 2, as defined in the homological proof of Lemma~\ref{lem1}.) 
The proposition now follows from the fact that there is an epimorphism from $Q$ to $D_\infty$ defined by $a \mapsto a$, $\mu_i \mapsto b$ and $\mu'_i = \tau( \mu_i ) = a \mu_i a^{-1} \mapsto b^{-1}$. 
\end{proof}

The above proof and that of Lemma~\ref{lem1} work for links in a $\mathbb{Z}$-homology 3-sphere. 
Moreover, the same argument also imply the following further generalization. 

\begin{proposition}
Let $L = L_0 \cup L_1$ be a link in a $\mathbb{Z}/2\mathbb{Z}$-homology 3-sphere S, and $n \ge 2$ an integer, satisfying the following conditions.
\begin{itemize}
\item[(1)] 
The double branched covering $M$ of $S$ branched over $L_0$ is a $\mathbb{Z}/n\mathbb{Z}$-homology 3-sphere.
\item[(2)] 
$L_1$ is non-empty, and every component of $L_1$ has the trivial mod 2 linking number with $L_0$.
\end{itemize}
Then there is a two-tone epimorphism from $G_L$ to $D_n$ for which $L_a = L_0$ and $L_b = L_1$, where $L_a$ and $L_b$ are the sublinks of $L$ as in the ``homological proof''. 
\end{proposition}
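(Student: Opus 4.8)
The plan is to follow the template of the proof of Proposition~\ref{prop} and of the ``homological proof'' above, replacing integral homology by homology with $\mathbb{Z}/n\mathbb{Z}$-coefficients and the target $D_\infty$ by $D_n$. Write $M$ for the double branched covering of $S$ along $L_0$, let $p\colon M \to S$ be the associated covering, $\tau\colon M \to M$ the nontrivial covering translation, and $\tilde{L}_1 = p^{-1}(L_1)$. First I would record the consequences of the two hypotheses. Condition~(2) says every component of $L_1$ has trivial mod~$2$ linking number with $L_0$, hence lifts to a pair of components interchanged by $\tau$; thus $\tilde{L}_1$ has $2r$ components, where $r$ is the number of components of $L_1$, and I fix meridians $\mu_1, \mu_1', \dots, \mu_r, \mu_r'$ with $\mu_i' = \tau_*(\mu_i)$. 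Condition~(1) says $H_*(M; \mathbb{Z}/n\mathbb{Z}) \cong H_*(S^3; \mathbb{Z}/n\mathbb{Z})$; in particular $H_1(M; \mathbb{Z}/n\mathbb{Z}) = 0$ and $H_2(M; \mathbb{Z}/n\mathbb{Z}) = 0$, which is exactly the input used to kill the end terms of the Mayer--Vietoris sequence in the proof of Proposition~\ref{prop}.

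The core step is the computation of $H_1(M - \tilde{L}_1; \mathbb{Z}/n\mathbb{Z})$. Taking a regular neighborhood $N$ of $\tilde{L}_1$, with $E = M - \operatorname{int} N$ and $\partial N = \bigsqcup_{j=1}^{2r} T_j$ a disjoint union of tori, I would run the Mayer--Vietoris sequence of $M = N \cup E$ with $\mathbb{Z}/n\mathbb{Z}$-coefficients. Since $H_2(M; \mathbb{Z}/n\mathbb{Z}) = 0 = H_1(M; \mathbb{Z}/n\mathbb{Z})$, it collapses to an isomorphism $H_1(\bigsqcup_j T_j; \mathbb{Z}/n\mathbb{Z}) \xrightarrow{\cong} H_1(N; \mathbb{Z}/n\mathbb{Z}) \oplus H_1(E; \mathbb{Z}/n\mathbb{Z})$. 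Comparing orders, namely $n^{4r} = n^{2r}\cdot \lvert H_1(E; \mathbb{Z}/n\mathbb{Z})\rvert$, gives $\lvert H_1(E; \mathbb{Z}/n\mathbb{Z})\rvert = n^{2r}$; and since the meridians die in $H_1(N; \mathbb{Z}/n\mathbb{Z})$ while the combined map is injective, the $2r$ meridians span a free $\mathbb{Z}/n\mathbb{Z}$-submodule of $H_1(E; \mathbb{Z}/n\mathbb{Z})$ of order $n^{2r}$, hence all of it. So $H_1(M - \tilde{L}_1; \mathbb{Z}/n\mathbb{Z})$ is free over $\mathbb{Z}/n\mathbb{Z}$ with basis $\mu_1, \mu_1', \dots, \mu_r, \mu_r'$, and this freeness is what makes the next step unobstructed.

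With the basis in hand I would define $\phi\colon H_1(M - \tilde{L}_1; \mathbb{Z}/n\mathbb{Z}) \to \mathbb{Z}/n\mathbb{Z} \cong \langle b \mid b^n \rangle$ by $\phi(\mu_i) = 1$ and $\phi(\mu_i') = -1$. This $\phi$ is surjective and satisfies $\phi \circ \tau_* = -\phi$, because $\tau_*$ interchanges $\mu_i$ and $\mu_i'$. Exactly as in the ``homological proof'', the orbifold $\mathcal{O}$ with underlying space $S - L_1$ and singular set $L_0$ of index~$2$ has orbifold fundamental group $\pi_1^{orb}(\mathcal{O})$ a quotient of $G_L$, sitting in $1 \to \pi_1(M - \tilde{L}_1) \to \pi_1^{orb}(\mathcal{O}) \to \langle a \mid a^2 \rangle \to 1$ via $\nu$. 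The equivariance $\phi \circ \tau_* = -\phi$ lets $\phi$ together with $a \mapsto a$ assemble into an epimorphism $\pi_1^{orb}(\mathcal{O}) \to H_1(M - \tilde{L}_1; \mathbb{Z}/n\mathbb{Z}) \rtimes_{\tau} \langle a \mid a^2 \rangle \twoheadrightarrow D_n$; precomposing with $G_L \twoheadrightarrow \pi_1^{orb}(\mathcal{O})$ yields the desired $\gamma\colon G_L \to D_n$. By construction the meridians of $L_0$ map to reflections and those of $L_1$ to the nontrivial rotations $b^{\pm 1}$, so $\gamma$ is two-tone with $L_a = L_0$ and $L_b = L_1$.

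I expect the main obstacle to be the homological computation over the ring $\mathbb{Z}/n\mathbb{Z}$: for composite $n$ this ring is not a principal ideal domain, so one cannot invoke the structure theorem, and the conclusion that the meridians form a \emph{basis} (rather than merely a generating set) must be extracted from the order count together with their linear independence, as indicated above. A secondary point to verify carefully is that the passage from $\pi_1(M - \tilde{L}_1)$ to its $\mathbb{Z}/n\mathbb{Z}$-homology is compatible with the $\tau$-action, so that the semidirect product really is a quotient of $\pi_1^{orb}(\mathcal{O})$, and hence of $G_L$; this is the same mechanism as in Proposition~\ref{prop}, now with $\langle b \rangle \cong \mathbb{Z}$ replaced by the finite cyclic group $\mathbb{Z}/n\mathbb{Z}$.
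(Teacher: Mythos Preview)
Your proposal is correct and follows essentially the same approach as the paper: the paper's proof is a one-paragraph sketch asserting that the hypotheses yield $H_1(M-\tilde L_1;\mathbb{Z}/n\mathbb{Z})$ free on the meridian basis $\{\mu_i,\mu_i'\}$ with $\tau$ swapping pairs, then invokes the construction of the preceding proposition, and you have simply filled in the Mayer--Vietoris computation and the assembly of the epimorphism in detail.
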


In fact, the assumption that $L$ is a link in a $\mathbb{Z}/2\mathbb{Z}$-homology 3-sphere implies that there is a unique double branched covering branched along $L$, and the two conditions imply that $H_1 ( M - \tilde{L}_1; \mathbb{Z}/n\mathbb{Z})$ is the free $\mathbb{Z}/n\mathbb{Z}$-module that has a base consisting of meridians $\{ \mu_i , \mu'_i  \mid 1 \le i \le r \}$, such that the homomorphism $\tau$ induced by the covering translation switches $\mu_i$ with $\mu'_i $ for each $i$.

\bigskip

There are possible future problems (also given by the anonymous referee): It would be nice if one could give a unified diagrammatic proof to all of the results in the paper, including the key Proposition~\ref{prop} and the results in the appendix. 
If successful, then it might bring our mathematical community a new deep insight into the link diagrams. 

Also the results in this paper might give a hint to the following natural question: 

\medskip

\noindent
\textbf{Question.} For $n = 1$ or $2$, the “greatest common quotient” of the $n$-component link groups is the free abelian group $\mathbb{Z}_n$. 
For $n \ge 3$, is there a non-abelian group $G$ bigger than $\mathbb{Z}_n$ (i.e., a non-commutative group with abelianization $\mathbb{Z}_n$), for which every $n$-component link group admits a (canonical) epimorphism onto $G$?

\medskip

If such a group $G$ exists, then by considering the $G$-coverings of link complements, one may be able to construct a link invariant stronger than the Alexander invariants, which are defined by using $\mathbb{Z}_n$-coverings.

\bibliographystyle{abbrv}
\bibliography{2024IIMS.bib}

\end{document}